\documentclass[11pt]{article}
\usepackage{hyperref}
\usepackage[active]{srcltx}
\usepackage[curve]{xypic}
\usepackage{graphicx}

\topmargin -0.6cm
\headsep 5.mm
\textheight 23cm
\textwidth 16cm
\oddsidemargin -0.6cm
\evensidemargin -0.6cm
\parskip 0.24cm
\usepackage[active]{srcltx}
\usepackage{epsfig,amsmath}
\usepackage[english]{babel}
\usepackage{amsmath,amsfonts,amssymb,amscd,color,epsfig,amsthm}

\newtheorem{newthm}{Theorem}
\newtheorem{theorem}{Theorem}[section]
\newtheorem{lemma}[theorem]{Lemma}
\newtheorem{proposition}[theorem]{Proposition}
\newtheorem{corollary}[theorem]{Corollary}

\newtheorem{definition}[theorem]{Definition}

\theoremstyle{remark}

\theoremstyle{plain}

\numberwithin{equation}{section}




\newcommand{\D}{\mathbb{D}}

\newcommand{\C}{\mathbb{C}}

\newcommand{\wt}{\widetilde}
\newcommand{\mb}{\mathbb}
\newcommand{\mc}{\mathcal}

\newcommand{\tu}{\textup}
\newcommand{\ol}{\overline}
\newcommand{\es}{\emptyset}
\newcommand{\mt}{\mapsto}

\newcommand{\mf}{\mathbf}

\newcommand{\olC}{\overline{\mathbb{C}}}

\def\CCC{{\cal C}}

\def\CCC{{\cal C}}

\def\C{\mbox{$\mathbb C$}}

\def\D{\mathbb D}

\def\lv{ \left(\begin{matrix} }
 \def\rv{\end{matrix}\right)}

\def\cal{\mathcal}

\def\dw{{\dw}}

\newcommand{\mylabel}[1]{\label{#1}}

\newcommand{\REFEQN}[1] { \begin{equation}\mylabel{#1} }
\newcommand{\ENDEQN}{\end{equation}}
\newcommand{\REFTHM}[1] { \begin{theorem}\mylabel{#1} }
\newcommand{\ENDTHM}{\end{theorem}}
\newcommand{\REFNTH}[1] { \begin{newthm}\mylabel{#1} }
\newcommand{\ENDNTH}{\end{newthm}}
\newcommand{\REFPROP}[1]{\begin{proposition}\mylabel{#1} }
\newcommand{\ENDPROP}{\end{proposition} }
\newcommand{\REFLEM}[1]{\begin{lemma}\mylabel{#1} }
\newcommand{\ENDLEM}{\end{lemma} }
\newcommand{\REFCOR}[1]{\begin{corollary}\mylabel{#1} }
\newcommand{\ENDCOR}{\end{corollary} }

\def\ov{\overline}

\title{{{\bf{A CHARACTERIZATION OF SIERPI\'{N}SKI CARPET RATIONAL MAPS}}}}
\author{
Yan Gao \footnote{partially supported by the grant no. 11501383 of NSFC.}
\and
Jinsong Zeng
\and
Suo Zhao\footnote{the corresponding author.}
}

\date{\today}

\begin{document}
\maketitle
\begin{abstract}
In this paper, we prove that a postcritically finite rational map with non-empty Fatou set is Thurston equivalent to an expanding Thurston map if and only if its Julia set is homeomorphic to the standard Sierpi\'{n}ski carpet.
\end{abstract}

\section{Introduction}

In Complex Dynamics, a central theme is to understand the global dynamics of the \emph{postcritically finite rational maps} (see Section \ref{expanding_thurston} for its definition).  In the case
of postcritically finite polynomials, Douady and Hubbard have introduced the so-called
Hubbard trees which capture their dynamical features \cite{DH2}. But for general rational maps, as far as we know, the overall understanding  has remained sketchy and unsatisfying (see e.g. \cite{CFKP, CG, CPT2}).

   When ignoring the complex structure, we consider a postcritically finite rational map as a postcritically finite \emph{branched covering} of the sphere $S^2$. Such maps are called {\it Thurston maps}.   Recently, M. Bonk-D. Meyer \cite{BM}, D. Meyer \cite{M1,M2}, 
   Z.Q. Li \cite{L} et al studied the dynamics of a kind of Thurston maps, called {\it expanding Thurston maps} (see Definition.~\ref{def:f}), from the aspects of combinatorics, topology, geometry and ergodic theory.

Thus, if we can establish a relation between  expanding Thurston maps and some class of rational maps (in the dynamical sense), one may, at least in principle, apply the methods and results used for expanding Thurston maps to the study of the corresponding rational maps.
In the level of topological conjugacy, it was shown in \cite[Proposition 2.3]{BM} that a rational map topologically conjugates to an expanding Thurston map if and only if its Julia set is the Riemann sphere.
In a weaker sense, Thurston introduced an equivalence relation among all Thurson maps, called {\it Thurston equivalence} (see Definition.~\ref{thurston-equivalence}).
Then a natural question is:

\textit{ What kind of postcritically finite rational maps with non-empty Fatou sets are Thurston equivalent to expanding Thurston maps?}

Our answer to this question is as follows.
\begin{theorem}\label{main_thm}
 A postcritically finite rational map with non-empty Fatou set is Thurston equivalent to an expanding Thurston map if and only if
its Julia set is homeomorphic to the standard Sierpi\'{n}ski carpet.
\end{theorem}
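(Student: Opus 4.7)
The plan is to prove the two directions separately, using Whyburn's topological characterization of the Sierpi\'nski carpet (connected, locally connected, with empty interior, and complementary Jordan domains having pairwise disjoint closures) as the organizing principle.

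For sufficiency (Julia set $\cong$ Sierpi\'nski carpet $\Rightarrow$ Thurston equivalent to an expanding Thurston map), I would construct an explicit expanding Thurston map $g$ Thurston equivalent to $f$ by a topological surgery inside the Fatou components. For each periodic cycle of Fatou components $U_0 \mapsto U_1 \mapsto \cdots \mapsto U_{p-1} \mapsto U_0$, the return map $f^p|_{U_0}$ is B\"ottcher-conjugate to $z \mapsto z^d$ on $\D$ and hence has a super-attracting periodic critical point, which is precisely the obstruction to expansion in the Bonk--Meyer sense. I would replace $f$ on $\bigcup_i U_i$ by a branched self-covering that (i) agrees with $f$ on $J(f)$, (ii) carries the same branch data as $f$ on the postcritical set, and (iii) has no periodic critical points inside the Fatou components. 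The hypothesis that $J(f)$ is a Sierpi\'nski carpet guarantees that the $\overline{U_i}$ are pairwise disjoint Jordan domains, so the surgery can be carried out independently on each cycle and glued consistently along $J(f)$. The resulting map $g$ is manifestly Thurston equivalent to $f$ (the surgery is supported away from $\text{post}(f)$, producing an isotopy rel $\text{post}(f)$). Expansion would then be verified by choosing a Jordan curve $C \supset \text{post}(g)$ adapted to the carpet structure and showing that the mesh of $g^{-n}(C)$ tends to zero, combining the expansion of $f^n$ on $J(f)$ with the destruction of super-attracting cycles in the surgery.

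For necessity (Thurston equivalent to expanding $\Rightarrow$ $J(f)$ is a Sierpi\'nski carpet), I would verify Whyburn's four conditions. Connectedness of $J(f)$ and local connectedness follow from standard theory for PCF rational maps (no wandering triangles, eventual super-attracting basins with locally connected boundary). The empty interior property follows because $f$ has non-empty Fatou set. Each Fatou component is a Jordan domain by the classical theory of PCF maps with super-attracting dynamics. The crux is the pairwise disjointness of the closures of Fatou components. Assuming for contradiction that $\overline{U_1} \cap \overline{U_2}$ contains a point $p$, I would extract a canonical multicurve or topological configuration associated with this touching, invariant under Thurston equivalence, and show that its presence prevents the Bonk--Meyer expansion property from holding: roughly, iterated preimages of any Jordan curve through $\text{post}(f)$ force cells to meet at preimages of $p$ on every scale, so the mesh cannot shrink to zero.

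The main obstacle is the sufficiency direction, and specifically two technical points within it. First, the surgery must be coherent not only on periodic Fatou cycles but also through the pre-periodic Fatou components mapping into them, while respecting the local degrees at all postcritical points. Second, once $g$ is defined, proving the mesh-shrinking condition requires a careful combinatorial analysis of the tiling induced by $g^{-n}(C)$, using that the Sierpi\'nski carpet structure of $J(f)$ produces a genuinely expanding cellular decomposition of $S^2$ after the surgery. A secondary difficulty, in the necessity direction, is isolating the precise Thurston-equivalence-invariant obstruction produced by touching Fatou components and ruling it out for expanding Thurston maps; I expect this to parallel, in spirit, Rees--Shishikura--Tan-Lei type arguments about combinatorial obstructions surviving Thurston equivalence.
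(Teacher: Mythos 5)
Your sufficiency plan rests on a surgery that cannot exist. You propose a branched covering $g$ that (i) agrees with $f$ on $J(f)$, (ii) matches the branch data on $\mathrm{post}(f)$, (iii) has no periodic critical points inside the Fatou components, and is ``supported away from $\mathrm{post}(f)$.'' But every periodic Fatou cycle of a postcritically finite rational map is super-attracting, so its cycle contains a periodic critical point, and a periodic critical point lies in $\mathrm{post}(f)$; hence the surgery region $\bigcup_i U_i$ is precisely where some of the points of $\mathrm{post}(f)$ live, and the ``supported away from $\mathrm{post}(f)$'' claim is already false. More fundamentally, (iii) is unattainable by \emph{any} map Thurston equivalent to $f$: if $g\circ\psi=\phi\circ f$ with $\psi\simeq\phi$ rel $\mathrm{post}(f)$, then $\mathrm{crit}(g)=\psi(\mathrm{crit}(f))$, and $\psi|_{\mathrm{post}(f)}$ conjugates the dynamics on postcritical sets; so each periodic critical point of $f$ is carried by $\psi$ to a periodic critical point of $g$. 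Since $g$ agrees with $f$ on $J(f)$, where $f$ has no periodic critical points, $g$'s periodic critical points are forced into the complementary Jordan domains --- exactly what (iii) forbids. The route ``kill the super-attracting cycles, then prove mesh shrinks'' is therefore self-defeating under the constraints of Thurston equivalence.

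The paper's sufficiency argument is entirely different and sidesteps this. It invokes Moore's theorem to collapse the closure of every Fatou component to a point, obtaining a quotient $\pi:\overline{\mathbb{C}}\to S^2$ along which $f$ descends to a Thurston map $F=\pi\circ f\circ\pi^{-1}$; expansion of $F$ is proved by a normal-families argument on conformal inverse branches showing $\mathrm{diam}(\widetilde X^n\setminus\mathcal K_n)\to 0$ for $n$-tiles, where $\mathcal K_n$ are the pullbacks of the postcritical Fatou components. The hard part --- which your proposal does not engage at all --- is upgrading the semi-conjugacy $\pi$ to a genuine Thurston equivalence: one must pick a homeomorphism $\psi$ homotopic to $\pi$ rel $\mathrm{post}(f)$, lift it tile-by-tile through $f$ and $F$ to a homeomorphism $\phi$, and then promote the concatenated homotopy $\psi\leadsto\pi\leadsto\phi$ to an \emph{isotopy} rel $\mathrm{post}(f)$. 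This is not automatic (the paper gives a Dehn-twist counterexample), and it is exactly where Lemma \ref{good-homo}, with its contractible-fiber hypothesis and the appendix argument that realizes it, does all the work.

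On necessity your outline (verify Whyburn's conditions; the crux is disjointness of Fatou closures, to be detected by a Thurston-equivalence-invariant configuration) points vaguely in the right direction but is never made precise where it matters. The paper instead produces a concrete semi-conjugacy $h$ as the uniform limit of the lifted homeomorphisms $h_n$ (isotopy lifting theorem plus expansion of $F$), shows that $h$ collapses each Fatou closure to a point by pushing periodic internal rays through the ladder and using the geometric shrinking of $F^{-qn}$-preimages, gets disjointness from the bijectivity of $h$ on $\bigcup_n f^{-n}(\mathrm{post}(f))$, and gets Jordanness of each Fatou component from the general fact that $S^2\setminus h^{-1}(x)$ is connected for a uniform limit of homeomorphisms $h$. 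That chain is the precise form of the ``obstruction'' you gesture at, and it is what you would need to supply.
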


To verify this theorem, we will recall some basic definitions and results in Section \ref{def_expanding}, and prove a series of lemmas about \emph{homotopy} and \emph{isotopy} in Section \ref{homotopy_equivalent} and Section \ref{sec-appendix}. The detailed proof of Theorem \ref{main_thm} is left in Section \ref{proof}.

 We will end the introduction with two remarks.

{\noindent \it 1.} There are many examples of postcritically finite rational maps with Julia sets homeomorphic to the standard Sierpi\'{n}ski carpet (see e.g. \cite[Appendix]{milnor:tanlei} and \cite{XQY}), and these Julia sets has the quasisymmetric rigidity \cite{BLM}. Conjecturally, the components of these rational maps are relatively compact in the space of rational functions up to M\"{o}bius conjugation. \cite[Question 5.3]{Mc1}

{\noindent \it 2.} In the proof of the main theorem, we use the following trick:  we first construct a {homotopy} $H:S^2\times I\to S^2~{\rm rel.}~P$ between two homeomorphisms, and then modify it to an {isotopy} relative to $P$, where $P$ is a finite subset of $S^2$. We emphasize that this result is generally false; see Section \ref{homotopy_equivalent} for a counterexample and detailed discussion.

{\noindent \bf Acknowledgement.} We really appreciate D. Meyer and P. Ha\"{\i}ssinsky very much for their helpful discussions and valuable suggestions. We also thank D. Margalit and A. Hatcher for enthusiastically answering our questions about surface topology.

\section{Preliminary}\label{def_expanding}
Here we present some notations and elementary background that will be used in
this paper. More details can be found in \cite{BM, CPT, Mi1}.
\subsection{Notations}
The $2$-sphere is denoted by $S^2$, the Riemann sphere  by
$\olC$ and the open unit disk by $\mathbb{D}$. The closure and interior of a subset $K\subset S^2$ is denoted by ${\rm cl}(K)$ and ${\rm int}(K)$ respectively.
The spherical metric on $S^2$ is $\sigma=\frac{2|dz|}{1+|z|^2}$.
The set of critical points of a branched covering
$F$ is denoted by ${\rm crit}(F)$ and the set of postcritical points by
${\rm post}(F)$. The Julia
set of a rational map $f$ will be denoted by $\mathcal{J}_f$; the Fatou
set is $\mathcal{F}_f$.

\subsection{Expanding Thurston maps}\label{expanding_thurston}
Let $F:X\to Y$ be a continuous map between two domains $X,Y\subset S^2$. The map $F$ is called a \emph{branched covering} if for each point $p\in X$, there exist an integer $n\geq1$, open neighborhoods $U$ of $p$ and $V$ of $q:=F(p)$, and orientation-preserving homeomorphisms $\phi:U\to \mb{D}$ and $\psi:V\to\mb{D}$ with $\phi(p)=\psi(q)=0$ such that
$\psi\circ F\circ\phi^{-1}(z)=z^n$ for all $z\in\mb{D}$.

The integer $\tu{deg}_F(p):=n\geq 1$ is called the \emph{local degree} of $F$ at $p$. A point $c$ with $\tu{deg}_F(c)\geq 2$ is called a \emph{critical point} of $F$.
A branched covering without critical points is called a \emph{covering}.

Let $F:S^2\to S^2$ be a branched covering. The set of critical points of $F$ is denoted by ${\rm crit}(F)$, and
the \emph{postcritical set} ${\rm post}(F)$ is defined as
$$\textup{post}(F):=\ol{\cup_{n\geq 1}F^{n}({\rm crit}(F))}.$$
The map $F$ is called \emph{postcritically finite} if $\#\textup{post}(F)<\infty$.

\begin{definition}\label{def:f}
  A \emph{Thurston map} is an orientation-preserving, postcritically
  finite, branched covering of $S^2$.
We fix a base metric $\rho$ on $S^2$
that induces the standard
topology on $S^2$. Consider a Jordan curve $\mathcal{C}\supset {\rm post}(f)$. The Thurston map
  $F$ is
  called \emph{expanding} if
    \begin{equation*}   \label{def:fexpanding}
      {\rm mesh}~F^{-n}(\CCC) \to 0 \text{ as } n\to \infty.
    \end{equation*}
where  ${\rm mesh}~F^{-n}(\CCC)$ denotes the maximal diameter of a
  component of $S^2\setminus F^{-n}(\CCC)$.
\end{definition}

It was shown in \cite{BM} that the expansionary property is  independent of the choice of the Jordan curve $\CCC$ (\cite[Lemma~6.1]{BM}), and  the base metric $\rho$ on $S^2$ as long as it induces the standard topology on $S^2$ (\cite[Proposition~6.3]{BM}).

\subsection{Partitions of $S^2$ induced by Thurston maps}\label{cell-decomposition}

Let $F:S^2\to S^2$ be a Thurston map
and fix a Jordan curve $\CCC\subset S^2$ with ${\rm post}(F) \subset \CCC$.
The closure of one of the two components of $S^2\setminus \CCC$
is called a \emph{$0$-tile} (relative to $(F,\CCC)$). Similarly, we call the closure
of one component of $S^2\setminus F^{-n}(\CCC)$ an
$n$\emph{-tile} (for any $n\geq 0$). The set of all $n$-tiles
is denoted by
$\mathbf{X}^n(\CCC)$. For any $n$-tile $X$, the set $F^n(X)=X^0$ is a $0$-tile
and
\begin{equation}
  \label{eq1}
  F^n\colon X\to X^0\quad \text{is a homeomorphism,}
\end{equation}
see \cite[Proposition 5.17]{BM}.  This means
in particular that each $n$-tile is a closed Jordan domain. The
definition of ``expansion'' implies that $n$-tiles become
arbitrarily small. Clearly, for each $n\geq0$, all $n$-tiles relative to $(F,\CCC)$ form a partition of $S^2$.


\subsection{Postcritically finite  Sierpi\'{n}ski carpet rational map}\label{rational map}

Let $f$ be a postcritically finite rational map. It was known that $\mathcal{J}_f$ is connected and locally connected (\cite{Mc2, Mi1}). The connectedness implies that each Fatou component is simply connected; and the local connectedness implies  the following lemma (see \cite[Theorem VI.4.4]{Why2}).
\begin{lemma}
  \label{small}
  For any $\epsilon>0$, there are finitely many Fatou components $U$ with
  ${\rm diam}_\sigma(U)\geq \epsilon$.
\end{lemma}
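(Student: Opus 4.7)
The plan is to deduce this from a classical result in plane topology together with the basic fact that the Julia set of a postcritically finite rational map is a locally connected continuum. Since $f$ is postcritically finite, $\mathcal{J}_f$ is connected and locally connected (as stated just before the lemma, referring to McMullen and Milnor); thus $\mathcal{J}_f$ is a locally connected continuum in $S^2$, and the Fatou components are precisely the connected components of $S^2 \setminus \mathcal{J}_f$. The statement then is exactly the content of Whyburn's theorem VI.4.4 of \emph{Analytic Topology}: for a locally connected continuum $K \subset S^2$, the collection of components of $S^2 \setminus K$ of spherical diameter at least $\epsilon$ is finite for every $\epsilon > 0$. So the proof amounts to checking hypotheses and citing Whyburn.

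If a self-contained argument is preferred, I would argue by contradiction. Suppose there are infinitely many distinct Fatou components $U_1,U_2,\ldots$ with $\diam_\sigma(U_n) \geq \epsilon$. Pick $x_n, y_n \in U_n$ with $d_\sigma(x_n, y_n) \geq \epsilon/2$, and extract subsequences so that $x_n \to x$ and $y_n \to y$ in the compact sphere $S^2$. Because the $U_n$ are pairwise disjoint open sets and each is its own Fatou component, no Fatou component can contain infinitely many $x_n$; hence any accumulation point of $(x_n)$ lies in $\mathcal{J}_f$, and similarly for $(y_n)$. So $x,y \in \mathcal{J}_f$ with $d_\sigma(x,y) \geq \epsilon/2$.

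Now use local connectedness of $\mathcal{J}_f$ at $x$ to choose a connected open neighborhood $W$ of $x$ in $S^2$ whose intersection with $\mathcal{J}_f$ is connected and with $\diam_\sigma(W) < \epsilon/4$. For large $n$, $x_n \in W$ while $y_n \notin W$, so $U_n$ meets $\partial W$. Since $\partial W$ is a (finite union of) Jordan curve(s) in $S^2$ intersecting $\mathcal{J}_f$ in a connected compact set contained in the small set $W \cap \mathcal{J}_f$, the accumulation of these crossings at $x$ produces a sequence of disjoint arcs in $\partial W$ bounded away from zero in diameter, contradicting the local connectedness of $\mathcal{J}_f$ at $x$.

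The only real obstacle is the last step of the self-contained proof: turning "many components cross $\partial W$" into a genuine failure of local connectedness. In practice this is handled cleanly by Whyburn's general theorem, so in the final write-up I would simply invoke \cite[Theorem VI.4.4]{Why2} as the paper already does, and the lemma follows in one line.
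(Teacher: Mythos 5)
Your proposal is correct and, in its final form, identical to the paper's: the paper gives no argument beyond citing \cite[Theorem VI.4.4]{Why2}, which is exactly what you do. Your self-contained sketch is a reasonable outline but, as you yourself note, the last step (converting ``infinitely many $U_n$ cross $\partial W$ near $x$'' into a failure of local connectedness) is the entire content of Whyburn's theorem and is not trivial to fill in, so deferring to the citation is the right call.
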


Let $U$ be a Fatou component of $f$. From \emph{Boettcher's theorem} it follows that there is a conformal map $\eta_{U}:\D\to U$ and some power $d_{U}$
such that $f\circ \eta_{U}(z)=
\eta_{f(U)}(z^{d_{U}})$ for all $z\in\D$.
Since the Julia set $\mathcal{J}_f$ is locally
connected, it follows from Carath\'eodory's theorem that the
conformal map $\eta$ extends to a continuous and surjective map
$\eta_{U}:\overline{\D}\to\overline{U}$.
An \emph{internal ray} of $U$ is the image $\eta_{U}([0,1)\theta)$ for unit number $\theta\in\partial\D$.
Note that internal rays are mapped to internal rays under $f$.

A set $S\subset \overline{\C}$ is called a \emph{(Sierpi\'{n}ski)} \emph{carpet} if it is homeomorphic to the standard Sierpi\'nski carpet. By Whyburn's characterization \cite{Why}, a set $S\subset\olC$  is a carpet if and only if it can be written as $S=\olC\setminus\cup_{n\geq 1}D_n$, where all $D_n$ are Jordan domains with pairwise disjoint closures, such that the interior of $S$ is empty and the spherical diameters $\tu{diam}_\sigma(D_n)\to 0\tu{ as }n\to\infty$.

We say a postcritically finite rational map to be a postcritically finite \emph{carpet rational map} if its Julia set is a carpet. This means that each Fatou component  is a Jordan
domain and distinct components of the Fatou set have  disjoint
closures. Furthermore, the boundary of a component of the  Fatou
set cannot contain  postcritical points.

\section{Homotopy, isotopy and Thurston equivalent}\label{homotopy_equivalent}
\begin{definition}[Relative homotopy and isotopy]
Let $X, Y$ be topological spaces and $A$ be a subset of $X$ (maybe empty). Let $\phi,\psi$ be continuous maps from $X$ to $Y$. We say that $\phi$ and $\psi$ are \emph{homotopic rel.}~$A$ if there exists a continuous map
$H: X\times[0,1]\to Y$, called a \emph{homotopy rel.}~$A$, such that
$$H(x,0)=\phi(x),~H(x,1)=\psi(x)~~\forall x\in X, \text{ and } H(x,t)=\phi(x)~~ \forall x\in A,~\forall t\in[0,1].$$
If the map $H|_{X\times t}:X\to Y$ is a homeomorphism for each $t\in[0,1]$, we call $H$ an \emph{isotopy rel.}~$A$.
\end{definition}
Let $H:X\times [0,1]\to Y$ be a homotopy. For simplity, we usually denote the map $H|_{X\times t}:X\to Y$ by $H_t:X\to Y$, where $t\in[0,1]$.

\begin{lemma}\label{homotopy}
$(1)$ Let $I=[0,1]$ and $\psi:I\to I$ be a continuous surjective map with $\psi(0)=0$ and $\psi(1)=1$. Then $\psi$ is homotopic to $id_{I}$  rel. $\{0,1\}$.

$(2)$ Let $A$ be a subset of the unit circle and  $\phi:\ov{\D}\to \ov{\D}$ be a continuous and surjective map. If the restriction $\phi|_{\partial\D}$ is homotopic to $id_{\partial\ov{\D}}$ relative to $A$, then $\phi$ and $id_{\ov{\D}}$ are homotopic rel. $A$.
\end{lemma}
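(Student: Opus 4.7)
The plan for both parts is the straight-line (convex-combination) homotopy, which is available because $I\subset\R$ and $\ov{\D}\subset\C$ are convex.

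For part (1), I would simply set $H(x,t) := (1-t)\psi(x) + tx$. Continuity is immediate, and convexity of $I$ guarantees $H(x,t)\in I$. The endpoint conditions $\psi(0)=0$ and $\psi(1)=1$ give $H(0,t)\equiv 0 = \psi(0)$ and $H(1,t)\equiv 1 = \psi(1)$, so $H$ is a homotopy rel.\ $\{0,1\}$ from $\psi$ to $\mathrm{id}_I$. Notably, the surjectivity of $\psi$ is not required for this argument.

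For part (2), I would first extract from the hypothesis the pointwise identity $\phi|_A = \mathrm{id}_A$. Indeed, if $h\colon\partial\D\times I\to\partial\D$ is a homotopy rel.\ $A$ from $\phi|_{\partial\D}$ to $\mathrm{id}_{\partial\D}$, then the rel-$A$ condition forces $h(a,t) = h(a,0) = \phi(a)$ for every $a\in A$ and every $t$; evaluating at $t=1$ yields $\phi(a) = h(a,1) = a$. Then I would set
\[
H(z,t) := (1-t)\phi(z) + tz, \qquad (z,t)\in\ov{\D}\times[0,1].
\]
By convexity of $\ov{\D}$, $H$ takes values in $\ov{\D}$; continuity is obvious; $H(z,0)=\phi(z)$ and $H(z,1)=z$; and for $a\in A$ one has $H(a,t) = (1-t)a + ta = a = \phi(a)$, so $H$ is rel.\ $A$. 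Hence $H$ is the required homotopy from $\phi$ to $\mathrm{id}_{\ov{\D}}$.

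I expect no serious obstacle in either part: the content of the lemma is essentially convexity of the target spaces, combined with the observation that any rel-$A$ homotopy must remain pointwise equal to its initial map on $A$ for all times. The genuinely delicate point advertised in the introduction --- namely, promoting a homotopy rel.\ $P$ to an \emph{isotopy} rel.\ $P$ --- is not present here, because the straight-line interpolant need not be a homeomorphism at intermediate times; that subtlety will need to be addressed in a separate, subsequent lemma.
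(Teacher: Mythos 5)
Your proof of the lemma as literally stated is correct and, for part~$(1)$, literally identical to the paper's (the same map $H(s,t)=ts+(1-t)\psi(s)$, written with the roles of $t$ and $1-t$ swapped). Your observation that surjectivity is not used is also accurate.

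For part~$(2)$ you genuinely diverge from the paper. You correctly extract $\phi|_A=\mathrm{id}_A$ from the rel.\ $A$ hypothesis and then observe that convexity of $\ov{\D}$ lets the straight-line homotopy $(1-t)\phi(z)+tz$ do the whole job. This is a valid proof of the stated lemma, and it is strictly simpler than the paper's: the paper instead builds a homotopy by a modified Alexander trick that on $\{0\le|z|<t\}$ rescales $\phi$ and on $\{t\le|z|\le1\}$ interpolates between the rescaled $\phi$ and the \emph{given} boundary homotopy $h$. The reason the paper does not take your shortcut is that the straight-line homotopy discards structure that is needed downstream but is not recorded in the lemma's wording. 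The paper's $H$ satisfies $H(z,t)=h(z,t)$ for $z\in\partial\D$, so it literally \emph{extends} the given boundary homotopy; your $H$ pushes boundary points into the interior of $\D$ whenever $\phi(z)\ne z$, so it does not. Moreover, when Lemma~3.2 is invoked in Section~4.3, the authors appeal to ``the specific construction of homotopies in Lemma~3.2'' to conclude that $(H^0_t)^{-1}(x_i)=\{\wt{x}_i\}$ for $t\in[0,1)$; that is a property of the fibers of the Alexander-style homotopy away from $t=1$, not something that follows from the existence statement alone, and the straight-line homotopy would not automatically deliver it. So your argument is a correct and cleaner proof of what the lemma \emph{says}, but it is not a drop-in replacement for what the paper actually \emph{uses}: with your construction, the later assertions that $H^0$ extends $h^0$ and that its $t<1$ fibers over $\mathrm{post}(F)$ are singletons would have to be re-justified (most naturally by restating the lemma to assert the two extra properties and keeping the paper's construction).
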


\begin{proof}
$(1)$ Define $H:I\times I\to I$ by
\[H(s,t)=ts+(1-t)\psi(s)\]
for $s,t\in I$. Then $H_t(0)=0$ and $H_t(1)=1$ for $t\in I$, and $H_0=\psi$, $H_1=id_I$. Hence $\psi$ and $id_I$ are homotopic rel. $\{0,1\}$ by $H$.

$(2)$ Let $h:\partial \mb{D}\times I\to \partial \mb{D}$ be the homotopy between $id|_{\partial\mb{D}}$ and $\phi|_{\partial\mb{D}}$ relative to $A$ with $h(\cdot,0)=id|_{\partial\mb{D}}$, $h(\cdot,1)=\phi|_{\partial\mb{D}}$ and $h(x,t)=x$ for all $x\in A, t\in I$. We obtain the desired homotopy by a small change of the ``Alexander trick''.

For $0<t<1$, we define
\[H(z,t)=\left\{
           \begin{array}{ll}
             t\phi(z/t), & \hbox{$0\leq |z|<t$;} \\[5pt]
             t\phi(z/|z|)\big(1-\dfrac{|z|-t}{1-t}\big)+h(z/|z|,t)\dfrac{|z|-t}{1-t}, & \hbox{$t\leq |z|\leq 1$.}
           \end{array}
         \right.
\]

We complementarily define $H(z,0):=z$ for $t=0$ and $H(z,0):=\phi(z)$ for $t=1$, then
 $H:\ol{\mb{D}}\times I\to \ol{\mb{D}}$ is a homotopy between $id$ and $\phi$ relative to $A$.
\end{proof}

A classical result about the modification of homotopy to isotopy in surfaces is due to D. B. A. Epstein \cite{Ep}; see also \cite[Theorem. 1.12]{FM}.

\begin{theorem}[Epstein]\label{homo-iso}
Let $S$ be a surface obtained from an orientable closed surface by removing $b\geq0$ open disks and $n\geq0$ points with disjoint closures, and $h$ an orientation preserving homeomorphism of $S$. If $h$ and $id_S$ are homotopic rel. $\partial S$, then they are isotopic rel. $\partial S$.
\end{theorem}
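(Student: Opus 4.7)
The plan is to cut $S$ into topological disks by a system of arcs that $h$ can be straightened along, and then to finish on each disk by Alexander's trick.

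First I would compactify $S$ by plugging in its $n$ ends, producing a compact surface $\hat S$ in which the punctures now appear as a distinguished set $P$ of interior marked points. Since $h$ is an orientation preserving homeomorphism homotopic to $\mathrm{id}_S$ rel.~$\partial S$, it extends to an orientation preserving homeomorphism $\hat h\colon \hat S\to\hat S$ fixing $P$ pointwise; after an initial small isotopy that makes $h$ the identity on thin annular collars of the ends, the given homotopy extends to a homotopy $\hat h\simeq \mathrm{id}_{\hat S}$ rel.~$\partial \hat S\cup P$. Next I would choose a finite collection of pairwise disjoint simple arcs $\alpha_1,\dots,\alpha_k$ in $\hat S$, with endpoints on $\partial\hat S$ and disjoint from $P$, cutting $\hat S$ into a disjoint union of closed topological disks $D_1,\dots,D_m$, each containing at most one point of $P$. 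The homotopy hypothesis implies that for every $i$, $\hat h(\alpha_i)$ is homotopic rel.~endpoints to $\alpha_i$ in $\hat S\setminus P$.

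The key step is to promote these path homotopies to ambient isotopies. Using the arc version of Epstein's theorem -- two simple arcs in a surface that are homotopic rel.~endpoints are ambiently isotopic through an isotopy rel.~$\partial\hat S\cup P$ -- I would straighten the arcs one at a time, working inside a regular neighborhood of the next arc so as not to disturb those already fixed. This yields a homeomorphism $h'$, isotopic to $\hat h$ rel.~$\partial\hat S\cup P$, that fixes $\bigcup_i\alpha_i$ pointwise. The restriction of $h'$ to each complementary disk $D_j$ is then an orientation preserving homeomorphism fixing $\partial D_j$ pointwise; after a preliminary isotopy of $D_j$ rel.~$\partial D_j$ that brings the marked point $p_j$ (if any) back to itself, Alexander's trick -- a straight-line contraction in radial coordinates centered at $p_j$ -- isotopes $h'|_{D_j}$ to $\mathrm{id}_{D_j}$ rel.~$\partial D_j\cup(P\cap D_j)$. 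Pasting these disk-isotopies across the $\alpha_i$ and concatenating with the arc-straightening isotopy gives an isotopy $\hat h\simeq\mathrm{id}_{\hat S}$ rel.~$\partial\hat S\cup P$; restricting to $S$ produces the required isotopy $h\simeq \mathrm{id}_S$ rel.~$\partial S$.

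The main obstacle is the arc-straightening step: promoting a homotopy of a simple arc (rel.~endpoints) to an ambient isotopy rel.~$\partial\hat S\cup P$. This is not a formal consequence of anything in Section \ref{homotopy_equivalent}, but a genuinely surface-topological fact, and is essentially where Epstein's original argument concentrates its work. Once it is in hand, the remainder is bookkeeping: ordering the arcs so that $\hat S$ is cut into disks one arc at a time, checking that each ambient isotopy can be taken with support in a small neighborhood disjoint from the previously straightened arcs, and verifying that the concatenation of all the intermediate isotopies is itself an isotopy rel.~$\partial\hat S\cup P$.
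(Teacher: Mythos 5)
The paper does not prove this statement; it is quoted as a classical theorem of Epstein (cited to \cite{Ep} and \cite[Theorem 1.12]{FM}), so there is no in-paper proof to compare your attempt against, and the proposal must be judged on its own.

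Your strategy---compactify the punctures, cut $\hat S$ into disks by a disjoint system of simple arcs, straighten those arcs one at a time by promoting ``homotopic rel.\ endpoints'' to ambient isotopy, and finish on each disk with the Alexander trick---is the standard route to Epstein's theorem, and you are right that the arc-straightening step is where the genuine surface-topological work lives. But as written the argument silently excludes the case $b=0$: when $S$ has no boundary circles, $\hat S$ is a closed surface (possibly with marked points), there are no arcs with endpoints on $\partial\hat S$, and arcs disjoint from $P$ cannot cut $\hat S$ into disks. There one must start by straightening a suitable system of \emph{simple closed curves} (or arcs between the marked points when $n>0$), which needs a different although closely related Epstein-type input---homotopic essential simple closed curves are ambiently isotopic, plus an annulus version of Alexander's trick---and for closed surfaces of positive genus that is exactly where Theorem~\ref{homo-iso} is most delicate. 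Since the paper only ever applies Theorem~\ref{homo-iso} to the surface $M=S\setminus\bigcup_{p\in P}\mathrm{int}(D_p)$, which always has nonempty boundary, your proof covers the use made of it here, but it does not prove the statement in the full stated range $b\geq 0$. Two smaller points you should make explicit: the restricted homotopy $H|_{\alpha_i\times I}$ lands in $\hat S\setminus P$ simply because $H$ already maps into $S=\hat S\setminus P$, and extending $H$ (and $h$) over the filled-in punctures requires a properness observation, without which the passage from $S$ to $\hat S$ is not justified.
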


One may ask when marking a finite set $P$ in the surface $S$ (defined as in the theorem above), are two orientation preserving  homeomorphisms of $S$ that are homotopic rel.~$\partial S\cup P$ still isotopic rel.~$\partial S\cup P$? The answer  is \textsc{NO} in general. Here is a simple counterexample:

Choose $S=\overline{\mathbb{D}}$, the closed unit disk, and let $P\subset \mathbb{D}$ contain at least two points. Let $h$ be a Dehn twist on   $\overline{\mathbb{D}}$ along a Jordan curve surrounding $P$. It is known that $h$ is not isotopic to $id_{\overline{\mathbb{D}}}$ rel.~$\partial\mathbb{D}\cup P$. But
$H(z,t)=tz+(1-t)h(z),t\in[0,1],z\in \overline{\mathbb{D}}$ is a homotopy rel.~$\partial\mathbb{D}\cup P$ between $h$ and $id_{\overline{\mathbb{D}}}$. A similar counterexample can be given on $S^2$ with at least four marked points.

So, in general, two homeomorphisms of an orientable surface  homotopic relative to marked points are not necessarily isotopic relative to the marked points. However, if the homotopy is well chosen, the conclusion holds. We leave the proof of the following lemma in the appendix.

\begin{lemma}\label{good-homo}
Let $P$ be a finite set in an orientable surface $S$, and $H:S\times I\to S$~rel.~$P\cup\partial S$ be a homotopy  such that $H_0=id_S$ and $h:=H_1$ is an orientation preserving homeomorphism. If each $K_p:=\cup_{t\in[0,1]}H^{-1}_t(p)$ is \emph{contractible within $S\setminus P$}, i.e., all but one components of $(S\setminus P)\setminus K_p$ are simply-connected, and they are pairwise disjoint,  then $\phi$ is isotopic to $id_{S}$ rel.~$P\cup \partial S$.
\end{lemma}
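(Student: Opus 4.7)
The plan is to apply Epstein's theorem (Theorem \ref{homo-iso}) to first obtain an isotopy between $\text{id}_S$ and $h$ that ignores the marked set $P$, and then to correct this isotopy into one fixing $P$ pointwise via a point-pushing argument. The role of the hypotheses on $K_p$ is to guarantee that the required point-pushes are trivial.

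Concretely, since $H$ is in particular a homotopy rel $\partial S$ from $\text{id}_S$ to the orientation-preserving homeomorphism $h$, Theorem \ref{homo-iso} produces an isotopy $J:S\times I\to S$ rel $\partial S$ with $J_0=\text{id}_S$ and $J_1=h$. For each $p\in P$, since $J_t$ is a homeomorphism fixing $\partial S$ and $h(p)=p$, the trace $\gamma_p(t):=J_t(p)$ is a loop in $S$ based at $p$. The isotopy class of $h$ in $\text{Mod}(S,P\cup\partial S)$ is encoded, modulo the (trivial) class of $h$ in $\text{Mod}(S,\partial S)$, by the unordered tuple $(\gamma_p)_{p\in P}$ as an element of the surface braid group $\pi_1(C_{|P|}(S\smallsetminus\partial S),P)$. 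It therefore suffices to show that this braid element is trivial, for then a standard point-pushing isotopy converts $J$ into an isotopy of $S$ rel $P\cup\partial S$ from $\text{id}_S$ to $h$.

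To trivialize the braid I would compare $J$ with $H$. The concatenation $J\cdot\overline{H}$ is a loop at $\text{id}_S$ in the space of continuous self-maps of $S$ fixing $\partial S$, and since $H_t(p)=p$ for all $t$, evaluating this loop at $p$ is homotopic to $\gamma_p$. Using the pairwise disjointness of the closed sets $K_p$ I would choose pairwise disjoint open neighborhoods $W_p\supset K_p$ with $W_p\cap P=\{p\}$, small enough that each $W_p\smallsetminus\{p\}$ contains only the simply-connected components of $(S\smallsetminus P)\smallsetminus K_p$, the unique non-simply-connected component $E_p^{\infty}$ exiting through $\partial W_p$. Because $H_t^{-1}(p)\subset K_p\subset W_p$ for every $t$, one can homotope the loop $J\cdot\overline{H}$ through maps whose evaluation at $p$ lies in $W_p$; this yields a representative of $\gamma_p$ inside $W_p\smallsetminus\{p\}\subset S\smallsetminus(P\smallsetminus\{p\})$. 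Any obstruction to null-homotoping such a representative must be carried by $E_p^{\infty}$, which exits $W_p$, so $\gamma_p$ is null-homotopic in $S\smallsetminus(P\smallsetminus\{p\})$. The pairwise disjointness of the $W_p$ makes these null-homotopies compatible simultaneously.

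The principal obstacle will be the last step: rigorously deducing the null-homotopy of $\gamma_p$ inside $W_p\smallsetminus\{p\}$ from the bare set-theoretic data of $K_p$ and the preimage structure of $H$. One expects this to require choosing the $W_p$ as genuine regular neighborhoods of $K_p$ so that $W_p\smallsetminus\{p\}$ deformation retracts onto a tree-like skeleton, together with an application of the isotopy extension theorem to globalize the resulting local point-pushes into an ambient isotopy of $S$ rel $P\cup\partial S$.
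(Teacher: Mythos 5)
Your approach is genuinely different from the paper's. The paper's proof works by choosing pairwise-disjoint closed disks $D_p$ whose boundaries $\gamma_p = \partial D_p$ avoid $K = \bigcup_p K_p$, applying Epstein's theorem to modify $h$ so that it fixes each $\gamma_p$, and then proving the result separately on each $D_p$ (by the Alexander trick) and on $M = S \setminus \bigcup_p \operatorname{int}(D_p)$ (by a radial-projection argument that pushes the homotopy into $M$, then Epstein again), gluing these along annular collars of the $\gamma_p$. Your plan is instead: apply Epstein once globally, record the braid $(\gamma_p)_p$ traced by $P$ under the resulting isotopy $J$, and trivialize it via the Birman/point-pushing framework. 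That framework is correct in principle; the issue is that your argument for trivializing the braid does not go through.

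The central gap is the step where you write that ``Because $H_t^{-1}(p)\subset K_p\subset W_p$ for every $t$, one can homotope the loop $J\cdot\overline{H}$ through maps whose evaluation at $p$ lies in $W_p$.'' The hypothesis controls \emph{preimages} of $p$ under the homotopy $H$, i.e.\ the set of points that at some time map \emph{to} $p$. This says nothing about where $p$ itself is carried by the Epstein isotopy $J$: the trace $\gamma_p(t)=J_t(p)$ is produced by Epstein's theorem applied to the free homotopy class of $h$ rel $\partial S$, with no constraint whatsoever referencing $K_p$ or $W_p$, and it can wander arbitrarily far from $p$. Moreover, $J\cdot\overline{H}$ is merely a loop in the space of self-maps of $S$ --- that space is not simply connected in general, so there is no a priori reason this loop can be homotoped to one with confined evaluations; asserting that it can is essentially what you are trying to prove. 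A secondary, unaddressed issue is that even if you did show each individual $\gamma_p$ null-homotopic in $S\setminus(P\setminus\{p\})$, this does not trivialize the braid: the homomorphism from the pure surface braid group to the product $\prod_p \pi_1\bigl(S\setminus(P\setminus\{p\})\bigr)$ given by forgetting all but one strand is not injective (Brunnian braids give nontrivial kernel already for $|P|=3$ in the disk). Remarking that ``the pairwise disjointness of the $W_p$ makes these null-homotopies compatible'' is where the real work would have to happen, and it is precisely what the paper's cut-along-$\gamma_p$-and-glue construction accomplishes geometrically. As written, your proof does not use the ``contractibility'' of $K_p$ in any operative way and leaves the key implication unproved.
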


At the end of this section, we introduce the concept of Thurston equivalent.
\begin{definition}[Thurston equivalent]\label{thurston-equivalence}
Two Thurston maps $F, G$ on $S^2$ are said to be \emph{Thurston equivalent} if there exist homeomorphisms $\psi,\phi:S^2\to S^2$ that are isotopic rel.~${\rm post}(F)$ and satisfy $G\circ\psi=\phi\circ F$, that is, the following commutative diagram follows:
\[
\begin{CD}
{S^2} @>{F} >> {S^2}  \\
@V{\psi}VV @ V{\phi} VV   \\
{S^2} @>{G} >> {S^2} \\
\end{CD}
\]
\end{definition}

\section{A characterization of  carpet rational maps}\label{proof}
The objective of this section is to prove Theorem \ref{main_thm}. We will first summarize the idea  ( Section\ \ref{outline}) and  then give the detailed proof ( Sections\ \ref{expanding-quotient} and \ref{main-proof}).

\subsection{The outline of the proof}\label{outline}
Let $f$ be a postcritically finite rational map, and $F$ a Thurston map.

For the necessity, by repeatedly using the isotopy lifting theorem, we obtain a sequence of homeomorphisms $\{\phi_n\}$ such that $\phi_{n}\circ f=F\circ \phi_{n+1}$ and $\phi_n$ is isotopic to $\phi_{n+1}$ rel. $f^{-n}({\rm post}(f))$ for all $n\geq 0$. This sequence of homeomorphisms converges to a semi-conjugacy $h$ from $f:\widehat{\mathbb{C}}\to\widehat{\mathbb{C}}$ to $F:S^2\to S^2$ by the expansionary property of $F$. With the properties of this semi-conjugacy $h$, we can prove that the Julia set of $f$ is a Sierpi\'{n}ski carpet.

The sufficiency  proceeds as follows. Suppose that $f$ has  Sierpi\'{n}ski carpet Julia set. By collapsing the closure of each Fatou component to a point, we obtain the quotient map $\pi:\ol{\mathbb{C}}\to S^2$, by which the rational map $f$ descends to an expanding Thurson map (This assentation is proven in   Section\ \ref{expanding-quotient}). This yields a semi-conjugacy $\pi$ from the rational map $f$ to an expanding Thurston map $F$. We can carefully choose a homeomorphism $\psi$ in the homotopy class of $\pi$ rel.~${\rm post}(f)$ such that $\psi$ has a lift $\phi$ along $f$ and $F$, i.e., $F\circ\psi=\phi\circ f$ on $\widehat{\mathbb{C}}$, and the homeomorphism $\phi$ is homotopic to $\pi$ rel.~${\rm post}(f)$. We then get a homotopy rel. ${\rm post}(f)$ between $\psi$ and $\phi$ by concatenating the homotopy between $\psi,\pi$ and that between $\pi,\phi$. This homotopy turn out to satisfy the properties of Lemma \ref{good-homo}. It follows that $\phi$ and $\psi$ are isotopic rel. ${\rm post}(f)$.

\subsection{The expanding quotient}\label{expanding-quotient}

 We will show in this part that any postcritically finite carpet rational map  can be semi-conjugated to an expanding Thurston map.
The base of this fact is  Moore's Theorem.

\begin{lemma}[Moore \cite{Moo}] \label {moore}
Let $\equiv$ be an equivalence relation on  $\mathrm{2}$-sphere $S^2$ satisfying

$(1)$ the equivalence relation $\equiv$ is closed,

$(2)$ each equivalence class of $\equiv$ is a compact connected subset of $S^2$,

$(3)$ the complement of each equivalence class of $\equiv$ is a connected subset of $S^2$,

$(4)$ there are at least two distinct equivalence classes.

\noindent Then the quotient space $S^2/_\equiv$ is homeomorphic to $S^2$.
\end{lemma}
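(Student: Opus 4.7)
The plan is to identify $Q := S^2/_\equiv$ as a topological $2$-sphere by verifying a classical characterization of $S^2$, most conveniently Zippin's criterion: a non-degenerate Peano continuum is homeomorphic to $S^2$ if and only if it contains a simple closed curve and every simple closed curve in it separates it into exactly two domains each having the curve as its full boundary.

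First I would set up the point-set topology of $Q$. Closedness of $\equiv$ in $S^2\times S^2$ together with compactness of $S^2$ implies that the projection $\pi:S^2\to Q$ is a closed map and $Q$ is compact Hausdorff; together with compactness of each equivalence class this yields upper semicontinuity of the decomposition. A standard argument using a sequence of finite open covers of $S^2$ by $\equiv$-saturated sets then produces a countable basis, giving metrizability of $Q$. Connectedness is inherited from $S^2$. Local connectedness is the first substantive point: given $y\in Q$ and an open neighborhood $V\ni y$, the saturated set $\pi^{-1}(V)$ is open and contains the class $E=\pi^{-1}(y)$; because hypothesis $(3)$ forces $S^2\setminus E$ to be connected, one can inscribe inside $\pi^{-1}(V)$ a connected $\equiv$-saturated open neighborhood of $E$, whose image under $\pi$ is a connected neighborhood of $y$ contained in $V$. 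Together with hypothesis $(4)$ this makes $Q$ a non-degenerate Peano continuum.

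Next I would verify Zippin's Jordan-curve condition. Given a simple closed curve $\gamma\subset Q$, its preimage $C=\pi^{-1}(\gamma)$ is a continuum in $S^2$ obtained from $\gamma$ by blowing up each of its points into the corresponding equivalence class. One has to show that $S^2\setminus C$ has exactly two components and that $\pi$ sends each of them onto one side of $\gamma$. The strategy is to approximate $C$ by a genuine Jordan curve $\widetilde{\gamma}\subset S^2$ which projects onto $\gamma$, constructed by selecting one point from each class in $C$ and joining them by short arcs through the intervening classes using upper semicontinuity; the Jordan--Schoenflies theorem on $S^2$ then yields two complementary Jordan domains. Saturating these domains with respect to $\equiv$ neither merges nor splits components, because each added class $E$ lies entirely in one of the domains (the alternative would contradict hypothesis $(3)$, which prevents a class from acting as a separating cut). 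That each of the two components of $Q\setminus\gamma$ has $\gamma$ as its full boundary is then routine from upper semicontinuity and accessibility of boundary points of a Jordan domain.

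The hard part is precisely the construction of the approximating Jordan curve $\widetilde{\gamma}\subset S^2$ and the verification that saturation by $\equiv$ of its two Jordan-domain complements in $S^2$ produces exactly two well-defined complementary pieces in $Q$. This is where hypothesis $(3)$ -- the non-separating character of each equivalence class -- does the essential work: without it a single class could straddle $\gamma$ like a cutting arc and manufacture spurious components of $Q\setminus\gamma$, destroying the Jordan curve property on the quotient. Once this step is in place, Zippin's characterization immediately delivers a homeomorphism $Q\cong S^2$.
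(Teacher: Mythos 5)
The paper does not prove Lemma~\ref{moore}: it is stated as a citation to Moore's 1925 paper and used as a black box, so there is no ``paper's proof'' to compare against. Your proposal is therefore assessed on its own merits.

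Your global strategy --- show $Q=S^2/_{\equiv}$ is a nondegenerate Peano continuum and then invoke Zippin's characterization of the $2$-sphere via the Jordan curve property --- is a recognized route to Moore's theorem. One small remark on the preliminary part: you do not need hypothesis $(3)$ for local connectedness. Once $\pi$ is known to be a closed, monotone surjection of $S^2$ onto the compact metric space $Q$, the Hahn--Mazurkiewicz theorem (a continuous image of a Peano continuum into a metric space is a Peano continuum) already gives that $Q$ is Peano; the ad hoc saturated-neighborhood argument you sketch is superfluous, though not incorrect.

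The real gap is exactly where you flag it: constructing a simple closed curve $\widetilde{\gamma}\subset C=\pi^{-1}(\gamma)$ that projects onto $\gamma$. As stated --- ``selecting one point from each class in $C$ and joining them by short arcs through the intervening classes'' --- the step fails in general. The equivalence classes need not contain any arcs at all: there exist upper semicontinuous decompositions of $S^2$ satisfying $(1)$--$(4)$ whose nondegenerate elements are hereditarily indecomposable continua (pseudo-arcs), and such a continuum contains no arc, so no Jordan curve can be threaded \emph{through} it. If instead you allow the joining arcs to leave $C$ and wander in the ambient sphere, then $\widetilde{\gamma}\not\subset C$, $\pi(\widetilde{\gamma})\neq\gamma$, and the ensuing saturation argument --- which relies on $\widetilde{\gamma}$ lying over $\gamma$ --- no longer applies as written. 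Also note that $\gamma$ has uncountably many points, so ``selecting one point from each class'' and then concatenating countably many arcs does not by itself produce a curve whose image is all of $\gamma$. Establishing that $S^2\setminus\pi^{-1}(\gamma)$ has exactly two components with common boundary $\pi^{-1}(\gamma)$ is the genuine content of Moore's theorem, and it requires machinery (Janiszewski-type separation theorems, or Bing's shrinking criterion, or a Riemann-mapping argument in dimension~$2$) that the proposal does not supply. The plan identifies the right target but leaves the essential step unproved.
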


Let $f$ be a postcritically finite rational map with Sierpi\'{n}ski carpet Julia set. We then define an equivalence relation $\sim$ on $\olC$: for any $z,w\in \widehat{\mathbb{C}}$,
$z\sim w$  if and only if either $z=w$ or $z, w$  belong to the closure of a common Fatou component.

We claim that the equivalence relation $\sim$ satisfies the $4$ properties of Lemma \ref{moore}.
Clearly, the properties $(2), (3), (4)$ holds. To check the property (1), it suffices to show that given
  two convergent sequences $(z_n)_{n\geq1}$ and $(w_n)_{n\geq1}$ in $\widehat{\mathbb{C}}$ with $z_n\sim w_n$ for all $n\geq1$ it
  follows that $\lim z_n \sim \lim w_n$. This is clear in the
  case when for sufficiently large $n$ the points $z_n$ and $w_n$ are
  contained in some fixed equivalence class, since each
  equivalence
  class is compact. Otherwise, we may assume that for distinct
  $n,m\geq1$ the points $z_n$ and $z_m$ are contained in
  distinct equivalence classes. In this case, the diameter of the equivalence class containing $z_n$ becomes arbitrarily small as $n\to \infty$ by Lemma \ref{small}, then we have $\lim z_n = \lim w_n$. Thus $\sim$ is closed.

Using Lemma \ref{moore}, the quotient space
$$ \overline{\C}/_\sim= \{[z] : \text{$[z]$ is the equivalence class of $z$, }z \in\overline{\C}\}$$
with the quotient topology is homeomorphic to $S^2$. We identify $\mathbb{C}/_\sim$ with $S^2$ so that the quotient map can be written as the continuous map  $\pi: \olC\to S^2$. This map is also uniformly continuous with respect to the spherical metric $\sigma$. 

Since the equivalence relation $\sim$ is \emph{$f$-invariant}, i.e., $z\sim w\Rightarrow f(z)\sim f(w)$, then the map $f$ descends to a map $F$ defined by $F(x)=\pi\circ f\circ\pi^{-1}(x)$ for all $x\in S^2$, that is, the following commutative diagram holds:
\begin{equation}\label{commute-1}
\begin{CD}
{\widehat{\mathbb{C}}} @>{f} >> {\widehat{\mathbb{C}}}  \\
@V{\pi}VV @ VV{\pi}V   \\
{S^2} @>{F} >> {S^2} \\
\end{CD}
\end{equation}
Furthermore, note that $\sim$ is also \emph{strongly invariant}, i.e., the image of any equivalence class is an equivalence class, we then have the following result, see \cite[Corollary~13.8]{BM} for a proof.

\begin{lemma}[Properties of $F$]\label{properites_F}
The map $F$ is a Thurston map, fullfilling that $\textup{crit}(F)=\pi(\textup{crit}(f))$, $\tu{post}(F)=\pi(\tu{post}(f))$ and $\textup{deg}_F(\pi(c))=\textup{deg}_f(c)$ for any $c\in \tu{crit}(f)$.
\end{lemma}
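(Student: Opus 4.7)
\textbf{The plan} is to verify in order that $F$ is continuous, is an orientation-preserving branched covering with the prescribed critical data, and is postcritically finite. Continuity is immediate from the universal property of the quotient map $\pi$: since $\sim$ is $f$-invariant, the composition $\pi\circ f\colon\overline{\mathbb{C}}\to S^2$ factors through $\pi$, and the resulting map $F$ inherits continuity from $\pi\circ f$.

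\textbf{Local branched-cover analysis} is the main step. I first observe that no critical point of $f$ can lie on a Fatou boundary: if $c\in\partial U\cap\textup{crit}(f)$, then $f(c)\in\partial(f(U))\cap\textup{post}(f)$, contradicting the property that postcritical points of a carpet rational map avoid every Fatou-component boundary. Hence each $c\in\textup{crit}(f)$ is either a \emph{deep Julia point} (lying in no $\overline{U}$) or an interior point of some Fatou component $U$. For a deep Julia point $z$, both $z$ and $f(z)$ form singleton equivalence classes (the latter because $f(z)\in\textup{post}(f)\cap\mathcal{J}_f$, and postcritical points avoid Fatou boundaries), and using Lemma~\ref{small} we may choose a small saturated neighborhood of $z$ meeting only small, disjoint Fatou components; the local model $w\mapsto w^n$ of $f$ at $z$ then descends through $\pi$ to the required model of $F$ at $\pi(z)$, with $n=\textup{deg}_f(z)$. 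For a critical point $c$ in the interior of $U$, setting $V=f(U)$, a saturated annular neighborhood of $\overline U$ maps by $f$ to one of $\overline V$, and collapsing $\overline U$ and $\overline V$ to points in $S^2$ produces a branched cover of degree $\textup{deg}(f|_U)$ around $\pi(\overline U)$.

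\textbf{The main obstacle} will be identifying $\textup{deg}(f|_U)$ with $\textup{deg}_f(c)$ for each critical point $c\in U$. This reduces to showing that each Fatou component of a postcritically finite carpet rational map contains at most one critical point of $f$, which I expect to extract from Riemann--Hurwitz applied to $f|_{\overline U}\colon\overline U\to\overline V$ together with the absence of postcritical points on $\partial U$.

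\textbf{Conclusion.} Orientation preservation is inherited from the local $w\mapsto w^n$ models. The above analysis yields $\textup{crit}(F)=\pi(\textup{crit}(f))$ together with the local-degree identity. Iterating the commutative relation $\pi\circ f=F\circ\pi$ then gives $F^n(\textup{crit}(F))=\pi(f^n(\textup{crit}(f)))$ for all $n\geq 1$, hence $\textup{post}(F)=\pi(\textup{post}(f))$, which is finite since $\textup{post}(f)$ is. Thus $F$ is a Thurston map with all the claimed properties.
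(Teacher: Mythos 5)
The paper does not prove this lemma itself; it cites \cite[Corollary~13.8]{BM}, where the conclusion is derived for quotients of Thurston maps by arbitrary strongly invariant, closed equivalence relations of Moore type. Your self-contained route through the local structure of $f$ and $\pi$ is a legitimate alternative, and the first steps (continuity; the observation that $\textup{crit}(f)$ is disjoint from every $\partial U$; the trichotomy into deep Julia points versus interior Fatou critical points) are all sound.

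The step you flag as ``the main obstacle'' is, however, a genuine gap, and the tool you propose does not close it. Riemann--Hurwitz applied to $f|_{\overline{U}}\colon\overline{U}\to\overline{V}$ gives only $\textup{deg}(f|_U)-1=\sum_{c\in U\cap\textup{crit}(f)}(\textup{deg}_f(c)-1)$, which is entirely consistent with $U$ containing two or more distinct critical points, and the absence of postcritical points on $\partial U$ adds nothing against that possibility. The correct ingredient is the B\"ottcher coordinate already recorded in Section~\ref{rational map}: since $f\circ\eta_U(z)=\eta_{f(U)}(z^{d_U})$ with $\eta_U,\eta_{f(U)}$ conformal isomorphisms of $\D$, the map $f|_U$ is conformally conjugate to $z\mapsto z^{d_U}$, which has exactly one critical point. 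Hence $U$ contains a unique critical point $c=\eta_U(0)$ when $d_U\geq 2$, with $\textup{deg}_f(c)=d_U=\textup{deg}(f|_U)$, and that is the identity needed to match $\textup{deg}_F(\pi(c))$ with $\textup{deg}_f(c)$. With this fact in place, your local-model arguments (saturated disk neighborhoods around a deep Julia point or around a collapsed $\overline{U}$, and the descent of $w\mapsto w^n$ through $\pi$) do yield the lemma, though the details you wave at are precisely what \cite[Corollary~13.8]{BM} packages in generality.
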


We now remain to show the expansion of $F$.
The following topological result is needed.


Let $f$ be a postcritically finite rational map with Sierpi\'{n}ski carpet Julia set.  An arc or a Jordan curve in $\widehat{\mathbb{C}}$ is called \emph{regulated} (with respect to $f$) if its intersection with the closure of each Fatou component is either empty or a connected set, i.e., one point or one arc.

\begin{lemma}\label{regulated_gamma}
Let $f$ be a postcritically finite rational map with Sierpi\'{n}ski carpet Julia set.
\begin{enumerate}
\item Let $\wt{P}:=\{\wt{p}_1,\cdots,\wt{p}_N\}$ be a finite set in $\olC$ with $\wt{P}\cap\partial U=\emptyset$ and the cardinality $\#(\wt{P}\cap U)\leq 1$ for each Fatou component $U$. Then there exists a regulated Jordan curve $\wt{\CCC}\subset\widehat{\mathbb{C}}$ containing the set $\wt{P}$.
\item Let $\wt{\CCC}$ be a regulated Jordan curve  and $\wt{V}_0,~\wt{V}_1$  the two components of $\ol{\mathbb{C}}\setminus\wt{\CCC}$. Then $\CCC:=\pi(\wt{\CCC})$ is a Jordan curve, and $\pi({\rm cl}(\wt{V}_0)),~\pi({\rm cl}(\wt{V}_1))$ are the closures of the two components of $S^2\setminus \CCC$.
\end{enumerate}
\end{lemma}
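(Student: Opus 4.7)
For Part 2, my plan is to transfer the quotient to a parameterization $\wt\gamma:S^1\to\wt{\CCC}$. The non-trivial fibers of $\pi\circ\wt\gamma$ are precisely $\wt\gamma^{-1}(\wt{\CCC}\cap\tu{cl}(U))$ for Fatou components $U$ meeting $\wt{\CCC}$; the regulated hypothesis makes each a closed connected subset of $S^1$, and distinct fibers are pairwise disjoint since distinct Fatou components have disjoint closures. Collapsing pairwise disjoint closed sub-arcs of $S^1$ leaves a space homeomorphic to $S^1$, so $\CCC=\pi(\wt{\CCC})$ is a Jordan curve. For the component claim, a direct saturation computation gives $\pi^{-1}(\pi(\wt V_0)\setminus\CCC)=\wt V_0$: any $\tu{cl}(U)$ added to the saturation of $\wt V_0$ but not entirely contained in $\wt V_0$ must, by connectedness, meet $\wt{\CCC}$, and therefore projects into $\CCC$. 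Since $\pi$ is a quotient map, this makes $\pi(\wt V_0)\setminus\CCC$ open in $S^2$, and symmetrically for $\wt V_1$. These two open sets are disjoint (an overlap would force a $\sim$-equivalence class joining $\wt V_0$ and $\wt V_1$ to cross $\wt{\CCC}$, placing its image in $\CCC$), their union is $S^2\setminus\CCC$, and neither is empty since the regulated condition forbids $\wt{\CCC}\subset\tu{cl}(U)$. The Jordan curve theorem applied to $\CCC$ then forces each $\pi(\wt V_i)\setminus\CCC$ to be a single component $V_i$, and $\pi(\tu{cl}(\wt V_i))=\tu{cl}(V_i)$ follows by taking closures.

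For Part 1, I would proceed by lifting a Jordan curve from $S^2$. The images $\pi(\wt p_j)$ for $\wt p_j\in\mathcal{J}_f$ together with the Fatou points $\pi(\tu{cl}(U_i))$ for $\wt p_i\in U_i$ form $N$ distinct points in $S^2$, and since Fatou points constitute a countable set in $S^2$, a Jordan curve $C\subset S^2$ through these $N$ points can be chosen to avoid every other Fatou point. Away from these finitely many Fatou-point visits of $C$, the restriction $\pi|_{\mathcal{J}_f\setminus\bigcup\partial U}$ is a homeomorphism onto the complement of the Fatou points in $S^2$, so the lift of $C$ along that complement is uniquely forced; at each Fatou point $\pi(\tu{cl}(U_i))$ along $C$, the two one-sided limits of the forced lift land on $\partial U_i$ at distinct points $a_i,b_i$ (by a suitable local choice of $C$), which I bridge by a sub-arc of $\partial U_i$ to obtain a regulated Jordan curve $\wt{\CCC}'\subset\mathcal{J}_f$. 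To incorporate the Fatou-component points $\wt p_i$, I cut out each sub-arc of $\wt{\CCC}'\cap\partial U_i$ (from $a_i$ to $b_i$) and splice in the arc $\tau_i\subset\tu{cl}(U_i)$ consisting of the internal rays from $a_i$ to $\wt p_i$ and from $\wt p_i$ to $b_i$, yielding the desired regulated Jordan curve $\wt{\CCC}$ through $\wt P$; the intersection with each $\tu{cl}(U_i)$ becomes $\tau_i$, while intersections with other Fatou closures are inherited unchanged from $\wt{\CCC}'$.

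The main obstacle in Part 1 will be making the ``lift across each Fatou point'' step rigorous: showing that the one-sided limits on $\partial U_i$ are single, distinct points, and that the resulting parameterization $S^1\to\olC$ is a topological embedding rather than just a continuous surjection. I would handle this by exploiting that $\pi$ is a closed quotient map, that each $\tu{cl}(U_i)$ is a topological disk (hence cellular), and that the spherical diameters of small Fatou components decay (Lemma~\ref{small}). Concretely, choosing $C$ near each Fatou point within two disjoint small sectors in $S^2$ corresponds in $\olC$ to approaches from two disjoint sides of $\partial U_i$; compactness of $\tu{cl}(U_i)$ and continuity of the forced lift then yield well-defined single-point limits $a_i\ne b_i$, and the disjointness of the sectors ensures the bridging arcs on $\partial U_i$ meet no other part of the lift, so the whole curve is embedded.
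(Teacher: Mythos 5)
Your Part 2 is in the right spirit but contains a false identity. You claim $\pi^{-1}\big(\pi(\wt V_0)\setminus\CCC\big)=\wt V_0$; this is not correct. If $U$ is a Fatou component whose closure meets both $\wt V_0$ and $\wt\CCC$ (which always happens since the regulated curve $\wt\CCC$ runs along arcs of Fatou boundaries), every point of $\mathrm{cl}(U)\cap\wt V_0$ maps into $\CCC$ and is therefore excluded from $\pi^{-1}\big(\pi(\wt V_0)\setminus\CCC\big)$. The correct identity is $\pi^{-1}\big(\pi(\wt V_0)\setminus\CCC\big)=\wt V_0\setminus\pi^{-1}(\CCC)$, and one still gets openness because $\pi^{-1}(\CCC)$ is closed (it equals $\wt\CCC$ together with the closures of those Fatou components that touch $\wt\CCC$, which is closed by Lemma~\ref{small}). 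So the conclusion survives, but not by the reasoning you gave. Your Jordan-curve step (collapsing pairwise disjoint closed subarcs of $S^1$) is a fine elementary alternative to the paper's invocation of a lemma from \cite{BM}; for the component step, the paper instead argues that the $\pi$-preimage of an arc in $S^2\setminus\CCC$ is a continuum, so two endpoints in one component of $S^2\setminus\CCC$ have preimages in the same component of $\olC\setminus\wt\CCC$ — an argument you may find cleaner than repairing the saturation computation.

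Part 1 has a genuine gap at exactly the point you flag. The assertion that the ``two one-sided limits of the forced lift land on $\partial U_i$ at distinct points $a_i,b_i$'' is the hard part and is not implied by ``compactness of $\mathrm{cl}(U_i)$ and continuity of the forced lift.'' Compactness only gives that the cluster set of the lift as you approach the Fatou point $\pi(\mathrm{cl}(U_i))$ is a nonempty subcontinuum of $\partial U_i$; nothing so far prevents it from being a nondegenerate arc. Indeed, one can choose sequences $\wt x_n$ of buried Julia points accumulating on a proper subarc (or all) of $\partial U_i$ while $\pi(\wt x_n)\to\pi(\mathrm{cl}(U_i))$, so an arbitrary arc $C$ approaching the Fatou point can have a lift with no single-point limit. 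Your ``two disjoint small sectors'' fix is not defined in a usable way — the quotient $S^2$ has no canonical sector structure near a collapsed point, and a ``sector'' in $S^2$ does not pull back to something with a specified relation to $\partial U_i$. The paper circumvents this entirely by never taking one-sided limits at a Fatou point: it chooses small disks $D_k$ around each target point with $\partial D_k\cap E=\emptyset$, and disjoint arcs $\gamma_k$ between consecutive disks with $\gamma_k\cap E=\emptyset$. The preimages $\pi^{-1}(\partial D_k)$ and $\pi^{-1}(\mathrm{cl}(\gamma_k))$ are then honest Jordan curves and compact arcs in the buried part of $\mathcal{J}_f$ (so automatically regulated), and the remaining piece through $\wt p_k$ is chosen inside $\wt D_k:=\pi^{-1}(D_k)$ using the fact that $\wt D_k\cap\mathcal{J}_f$ is a Sierpi\'nski carpet. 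To make your lift-across-a-Fatou-point step rigorous you would essentially have to rediscover this decomposition.
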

\begin{proof}
{\it1. }Remember that $ \pi:\ol{\mathbb{C}}\to S^2$ is the quotient map defined above. We set $E=\{\pi(U)\mid U\in\mathcal{F}_f\}$ and $P=\{p_k:=\pi(\wt{p}_k)\mid 1\leq k\leq N\}$.  We first claim that there exist closed disk neighborhoods $D_k$ for each point $p_k\in P$ such that they are pairwise disjoint and their boundaries avoid $E$. To see this, notice that $\big\{S_{r,k}:=\{x\in S^2\mid\sigma(x,p_k)=r\}\big\}_{r>0}$ is a uncountable family of pairwise disjoint sets and $E(=\pi(\mathcal{F}_f))$ is countable. So we may choose sufficiently small $r_k$ for each $k\in\{1,\ldots,N\}$ such that $S_{r_k,k}\cap E=\emptyset$ and $S_{r_i,i}\cap S_{r_j,j}=\emptyset$ ($i\not=j$). The neighborhoods $D_k$ are defined as $D_k:=\{x\in S^2\mid \sigma(x,p_k)\leq r_k\}$.

 We then claim that there are  pairwise disjoint open arcs $\gamma_1,\ldots,\gamma_N\subset S^2\setminus(\cup_{1\leq k\leq N}D_k)$  such that $\gamma_k$ joins $D_k$,$D_{k+1}$ with $D_{N+1}:=D_1$ and  $\gamma_k\cap E=\emptyset$ for each $k\in\{1,\ldots,N\}$. Indeed, it is easy to find a sequence of pairwise disjoint open arcs $e_1,\ldots, e_N\subset S^2\setminus(\cup_{1\leq k\leq N}D_k)$ such that $\gamma_k$ joins $D_k$ and $D_{k+1}$. Moreover, for each $k$ we may also choose a small neighborhood $A_k$ of $e_k$ within $S^2\setminus(\cup_{1\leq k\leq N}D_k)$ so that $A_1,\ldots,A_N$ are still pairwise disjoint. Let $h_k:A_k\to\C$ be an injective map with $h_k(e_k)=(0,1)$, the open unit interval. Since $h_k(A_k)$ contains an uncountable family of pairwise disjoint horizontal intervals and $h_k(A_k\cap E)$ is countable, we may choose a horizontal interval in $h_k(A_k)$ disjoint with $h_k(E)$ such that its preimage by $h_k$ joins $D_k$ and $D_{k+1}$. We denote this arc by $\gamma_k$. Then the arcs $\gamma_1,\ldots,\gamma_N$ satisfy the requirements in the claim.
For each $k\in\{1,\ldots,N\}$, set $a_k$ and $b_k$ the intersection of $\partial D_k$ with $\gamma_{k-1}$ and $\gamma_k$ respectively.

By the two claims above, the lifts $\wt{\gamma}_k:=\pi^{-1}(\gamma_k)$ (\emph{resp}. $\wt{S}_k:=\pi^{-1}(\partial D_k)$), $k\in\{1,\ldots,N\}$, are pairwise disjoint open arcs (\emph{resp}. Jordan curves) in $\ol{\mathbb{C}}\setminus \ol{\mathcal{F}_f}$. They are therefore regulated. Besides, we also see that the boundary of $\wt{D}_k:=\pi^{-1}(D_k)$ is exactly $\wt{S}_k$, and the intersection of $\partial \wt{D}_k$ and $\cup_{k=1}^N{\rm cl}(\wt{\gamma}_{k})$ are two points $\wt{a}_k:=\pi^{-1}(a_k)$ and $\wt{b}_k:=\pi^{-1}(b_k)$. Therefore, to obtain a regulated Jordan curve containing $\wt{P}$, it is enough to select a regulated arc $\wt{\alpha}_k$ in each $\wt{D}_k$ that passes through the point $\wt{p}_k$ and joins the points $\wt{a}_k,\wt{b}_k\in \partial \wt{D}_k$. This can be easily done if one note that each $\wt{J}_k:=\wt{D}_k\cap \mathcal{J}_f$ is a Sierpi\'{n}ski carpet and it is mapped onto the standard carpet  by a self-homeomorphism of $S^2$.
Finally, the set $\wt{\CCC}:=(\cup_{k=1}^N\wt{\gamma}_k)\cup(\cup_{k=1}^N\wt{\alpha}_k)$ is a regulated Jordan curve  containing the set $\wt{P}$.

{\noindent \it 2}.  By the definition of the regulated Jordan curves, the map $\pi$ collapses one point or one arc on $\wt{\CCC}$ to a point. It follows from
 \cite[Lemma 13.30]{BM} that $\CCC$ is a Jordan curve.

 Let $x\neq y$ belong to a component of $S^2\setminus \CCC$. Then  $\pi^{-1}(x)$ and $\pi^{-1}(y)$ are contained in a common component of $\widehat{\mathbb{C}}\setminus \wt{\CCC}$. Otherwise, we pick an arc $\gamma$ in $S^2\setminus\CCC$ joining $x$ and $y$. By \cite[Lemma 3.1]{CPT}, the set $\pi^{-1}(\gamma)$ is a continuum containing $\pi^{-1}(x)$ and $\pi^{-1}(y)$.  It hence intersects $\wt{\CCC}$. Consequently, we get  $\gamma\cap\CCC\neq\es$, a contradiction.

By this fact, we can label the two components of $S^2\setminus \CCC$ by $V_0$ and $V_1$ such that $\pi^{-1}(V_0)\subset \wt{V}_0$ and $\pi^{-1}(V_1)\subset \wt{V}_1$. It implies that $\pi({\rm cl}(\wt{V}_i))\subset {\rm cl}({V}_i)$ for $i=0,1$. Since $\pi$ is surjective, we have $\pi({\rm cl}(\wt{V}_0))={\rm cl}(V_0)$ and $\pi({\rm cl}(\wt{V}_1))={\rm cl}(V_1)$
\end{proof}

Applying 1 of Lemma \ref{regulated_gamma} to the case of $\wt{P}:=\tu{post}(f)$, we obtain a regulated Jordan curve passing through
$\textup{post}(f)$. Fix this curve and denote it by $\CCC_f$. By 2 of Lemma \ref{regulated_gamma}, the set $\CCC_F:=\pi(\CCC_f)$ is a Jordan curve in $S^2$ containing $\tu{post}(F)$. We denote by $\mf{X}^n(\CCC_f)$ and $\mf{X}^n(\CCC_F)$ the set of $n$-tiles relative to $(f,\CCC_f)$ and $(F,\CCC_F)$ respectively.  The following lemma implies a correspondence between them.
\begin{lemma}\label{corresponding}
For any $n\geq 0$, the map $\Psi_n:\mf{X}^n(\CCC_f)\to \mf{X}^n(\CCC_F)$ defined by $\Psi(\wt{X}^n)\mt \pi(\wt{X}^n)$
is well-defined and one to one.
\end{lemma}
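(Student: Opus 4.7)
I will prove the lemma by induction on $n$. The base case $n=0$ is immediate from part 2 of Lemma \ref{regulated_gamma}, since the two $0$-tiles of $f$ are $\tu{cl}(\wt V_0)$ and $\tu{cl}(\wt V_1)$, whose $\pi$-images are exactly the two $0$-tiles $\tu{cl}(V_0)$ and $\tu{cl}(V_1)$ of $F$.

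For the inductive step the central identity is
\[
\pi^{-1}\bigl(F^{-k}(\CCC_F)\bigr) \;=\; f^{-k}\bigl(\pi^{-1}(\CCC_F)\bigr) \quad\text{for every }k\ge 0,
\]
which follows from $F\circ \pi = \pi\circ f$. Taking $\pi$-images yields $F^{-k}(\CCC_F)=\pi(f^{-k}(\CCC_f))$, because $\pi^{-1}(\CCC_F)$ is $\CCC_f$ together with the closures of those Fatou components $U$ with $U\cap \CCC_f\neq\es$, and each such $U$ already meets $f^{-k}(\CCC_f)$ (pull back any point of $\CCC_f\cap f^k(U)$ into $U$ via $f^k$). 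In addition, $\pi$ is a closed map, being the quotient by the closed equivalence relation $\sim$.

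I then analyse the fibre $\pi^{-1}(y)$ for a point $y \in S^2\setminus F^{-(n+1)}(\CCC_F)$. Such a fibre is either a single Julia point or an entire Fatou-component closure $\tu{cl}(U)$. In the second case, $y=\pi(U)$ and the hypothesis $y\notin\pi(f^{-(n+1)}(\CCC_f))$ forces $\tu{cl}(U)\cap f^{-(n+1)}(\CCC_f)=\es$; if a point $w\in\tu{cl}(U)\cap f^{-(n+1)}(\CCC_f)$ existed, then $\pi(w)=\pi(U)=y$ would land in $\pi(f^{-(n+1)}(\CCC_f))=F^{-(n+1)}(\CCC_F)$. Hence in either case $\pi^{-1}(y)$ is contained in the interior of a \emph{unique} $(n+1)$-tile of $f$. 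Since $\pi$ is closed, the map $y\mapsto (\text{the containing tile})$ is locally constant, hence constant on each connected component $C$ of $S^2\setminus F^{-(n+1)}(\CCC_F)$. Therefore, for each $(n+1)$-tile $\wt X$ of $f$, the image $\pi(\tu{int}(\wt X))$ coincides with a single such component $C$, and so $\pi(\wt X)=\tu{cl}(C)$ is an $(n+1)$-tile of $F$. Well-definedness of $\Psi_{n+1}$ follows; injectivity is then immediate, since distinct $(n+1)$-tiles of $f$ have disjoint interiors whose $\pi$-images are distinct components of $S^2\setminus F^{-(n+1)}(\CCC_F)$.

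\textbf{The main obstacle} is establishing, for $y$ outside $F^{-(n+1)}(\CCC_F)$, that the whole fibre $\pi^{-1}(y)$ sits inside the interior of a single $(n+1)$-tile of $f$, even when that fibre is an entire Fatou-component closure $\tu{cl}(U)$. Once this is verified, the remainder of the argument is a routine combination of continuity, the closed-map property of $\pi$, and the inductive hypothesis. The key geometric input is the identity $F^{-(n+1)}(\CCC_F)=\pi(f^{-(n+1)}(\CCC_f))$, which encodes how the regulated curve $\CCC_f$ and the Fatou components of $f$ interact under the collapsing map $\pi$.
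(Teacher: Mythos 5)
Your setup and the intermediate facts you isolate — the identity $F^{-k}(\CCC_F)=\pi(f^{-k}(\CCC_f))$, the description of the fibres as singletons or Fatou-closures, and the conclusion that each fibre over a point off the $n$-skeleton sits inside the interior of a single $f$-tile — all line up with the paper's proof. Replacing the paper's appeal to the continuum lemma \cite[Lemma 3.1]{CPT} by the ``closed map $\Rightarrow$ locally constant tile assignment'' argument is a clean and acceptable variant.

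However there is a genuine gap at the final step. What you have actually established is a map from components $C$ of $S^2\setminus F^{-n}(\CCC_F)$ to $n$-tiles of $f$, namely $C\mapsto\wt X_C$ where $\pi^{-1}(C)\subset\tu{int}(\wt X_C)$. The assertion ``for each $(n+1)$-tile $\wt X$ of $f$, the image $\pi(\tu{int}(\wt X))$ coincides with a single such component $C$'' requires this assignment $C\mapsto \wt X_C$ to be a \emph{bijection}, and you give no argument for that. If two distinct components $C_1\neq C_2$ had $\wt X_{C_1}=\wt X_{C_2}=\wt X$, then $\pi(\wt X)\supseteq\tu{cl}(C_1)\cup\tu{cl}(C_2)$ and $\Psi_n$ would not be well defined; if some $f$-tile were missed, $\pi$ of that tile would lie entirely inside $F^{-n}(\CCC_F)$. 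The paper closes this by counting: since $f^n,F^n$ are both branched coverings of degree $d^n$, $\#\mf{X}^n(\CCC_f)=\#\mf{X}^n(\CCC_F)=2d^n$, and then a point-separation argument (as in part~2 of Lemma~\ref{regulated_gamma}, using a Julia point in each tile interior that lies on no Fatou-component closure, hence has a singleton fibre) yields injectivity, whence bijectivity on finite sets of equal cardinality. You omit the cardinality count entirely and call the remainder ``routine,'' which it is not. A secondary, fixable inaccuracy: even once the bijection is in hand, the literal set equality $\pi(\tu{int}(\wt X))=C$ can fail, since $\pi(\tu{int}(\wt X))$ may meet $F^{-n}(\CCC_F)$ through collapsed Fatou-closures straddling $\partial\wt X$; the correct statement is the closure version $\pi(\wt X)=\tu{cl}(C)$. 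Finally, note that what you flag as ``the main obstacle'' (the fibre being a Fatou-closure) is actually the easy case, handled exactly as you do; the real work the paper does that you skip is the bijection.
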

\begin{proof}
Note that $f^n$ and $F^n$ are branched covering of degree $d^n$, then the cardinality of  $\mf{X}^n(\CCC_f)$ and $\mf{X}^n(\CCC_F)$ are the same, equal to $2d^n$.
For any $\wt{X}^n\in\mf{X}^n(\CCC_f)$, by (\ref{eq1}) the restriction $f^n:\wt{X}^n\to \wt{X}^0\in\mf{X}^0(\CCC_f)$ is a homeomorphism. It follows that $\partial \wt{X}^n_k$ is a regulated Jordan curve as well.
 The equation $F^n\circ\pi=\pi\circ f^n$ implies that $\pi(f^{-n}(\mathcal{C}_f))=F^{-n}(\mathcal{C}_F)$. Then an argument similar to the one used in the proof of 2 of Lemma \ref{regulated_gamma} shows  that, for each $X^n_k\in \mf{X}^n(\CCC_F)$, the set $\pi^{-1}(\tu{int}(X^n_k))$ is contained in a unique $n$-tile $\wt{X}^n_{\tau(k)}$ in $\mf{X}^n(\mathcal{C}_f)$ with $\tau(k)\neq \tau(k')$ if $k\neq k'$, and $\pi(\wt{X}^n_{\tau(k)})=X^n_k$. This fact gives  an one to one correspondence between $\mf{X}^n(\mathcal{C}_f)$ and $\mf{X}^n(\mathcal{C}_F)$ by $\wt{X}^n_{\tau(k)}\mapsto X^n_k$. The lemma is proved.
\end{proof}

Let $\mc{K}_0$ be the union of all \emph{postcritical Fatou components}, i.e., the Fatou components containing the postcritical points of $f$, and set $\mc{K}_n:=f^{-n}({\mc{K}}_0)$  for all $n\geq 0$. 
\begin{lemma}\label{max_diameter}
The maximum of $\tu{diam}_\sigma(\wt{X}^n\setminus\mc{K}_n)$ with $\wt{X}^n\in \mf{X}^n(\CCC_f)$ converges to $0$ as $n\to\infty$.
\end{lemma}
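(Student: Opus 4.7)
My plan is to push a fixed bound on ``orbifold diameter'' at the $0$-th level through the homeomorphism $f^n|_{\wt{X}^n}$ given by \eqref{eq1}, gaining a contraction factor $\lambda^{-n}$. Since $f$ is postcritically finite with carpet Julia set, Section~2.4 gives $\tu{post}(f)\subset\tu{int}(\mc{K}_0)\subset\FFF_f$, so $f$ is hyperbolic, and the carpet hypothesis excludes the Latt\`es/Chebyshev/power-map cases, so the orbifold of $f$ is hyperbolic. By the classical construction (cf.\ \cite{Mi1}) there exist a smooth conformal metric $\rho$ on $\olC\setminus\tu{post}(f)$ and $\lambda>1$ such that every inverse branch of $f^n$ $\lambda^{-n}$-contracts $\rho$-lengths. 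Because $\rho$ diverges near $\tu{post}(f)$ but is comparable to $\sigma$ on compact subsets of $\olC\setminus\tu{post}(f)$, one has $\sigma(x,y)\leq C\rho(x,y)$ throughout $\olC\setminus\tu{post}(f)$.

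At the $0$-th level I will build a compact, path-connected ``thickening'' of $K_0:=\wt{X}^0\setminus\mc{K}_0$ inside $\wt{X}^0\setminus\tu{post}(f)$: remove from each of the two $0$-tiles $\wt{X}^0$ a small open disk around each $p\in\tu{post}(f)\cap\CCC_f$ whose closure lies in the corresponding postcritical Fatou component (possible since postcritical points are interior to their Fatou components and distinct Fatou components have disjoint closures). The resulting set $L_0$ is $\wt{X}^0$ minus finitely many pairwise-disjoint boundary ``bites'', hence compact, path-connected, disjoint from $\tu{post}(f)$, and containing $K_0$. I set $D:=\max\tu{diam}_\rho(L_0)<\infty$, the maximum taken over the two $0$-tiles.

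Given any $n$-tile $\wt{X}^n$ and any $a,b\in\wt{X}^n\setminus\mc{K}_n$, I take $a':=f^n(a), b':=f^n(b)\in K_0\subset L_0$, connect them by a path $\gamma\subset L_0$ with $|\gamma|_\rho\leq D$, and lift $\gamma$ via $(f^n|_{\wt{X}^n})^{-1}$ (well-defined by \eqref{eq1}) to a path $\tilde\gamma\subset\wt{X}^n$ from $a$ to $b$. Since $L_0\cap\tu{post}(f)=\es$, $\tilde\gamma$ avoids $f^{-n}(\tu{post}(f))$, so the orbifold contraction applies and yields $|\tilde\gamma|_\rho\leq\lambda^{-n}D$, hence $\sigma(a,b)\leq C\lambda^{-n}D$. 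Taking the supremum over $a,b$ and over all $n$-tiles gives $\max_{\wt{X}^n\in\mathbf{X}^n(\CCC_f)}\tu{diam}_\sigma(\wt{X}^n\setminus\mc{K}_n)\leq C\lambda^{-n}D\to 0$. The main obstacle is justifying that the carpet hypothesis forces the orbifold of $f$ to be hyperbolic, so that the classical orbifold-expansion theorem supplies a uniform $\lambda^{-n}$ contraction for \emph{all} inverse branches of $f^n$ simultaneously; a minor technical point is the path-connectedness of $L_0$, which rests on the carpet structure recalled in Section~2.4.
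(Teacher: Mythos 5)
Your orbifold-metric approach is genuinely different from the paper's (which argues by contradiction via normal families, normality coming from the inverse branches $g_n=f^{-n}|_{U^0_0}$ avoiding the open set $\mc{K}_0$, and the contradiction coming from periodic Fatou components being super-attracting). In principle your route would give a quantitative decay rate, but there is a gap at the very first step. You cite Section~2.4 for the inclusion $\tu{post}(f)\subset\tu{int}(\mc{K}_0)\subset\mathcal{F}_f$, i.e., that \emph{every} postcritical point lies inside a Fatou component. Section~2.4 does not say this; it says only that the boundary of a Fatou component contains no postcritical points. This leaves open that some postcritical point $p$ is a buried point of $\mathcal{J}_f$ (e.g., a critical orbit terminating at a repelling periodic cycle), a possibility that neither the carpet hypothesis nor the statement of the lemma excludes. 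If such a $p$ exists then $p\in\wt{X}^0\setminus\mc{K}_0=K_0$ yet $p\in\tu{post}(f)$, so no compact $L_0\subset\wt{X}^0\setminus\tu{post}(f)$ can contain $K_0$, and your bound $D<\infty$ never materializes (the orbifold density diverges at $p$). The related assertion ``$f$ is hyperbolic'' is likewise not a consequence of the carpet hypothesis; what is a consequence is that the \emph{orbifold} $O_f$ is hyperbolic, a weaker statement compatible with $\tu{post}(f)\cap\mathcal{J}_f\neq\es$.

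Even under the extra hypothesis $\tu{post}(f)\subset\mathcal{F}_f$, the quoted ``classical construction'' is over-stated: the uniform expansion $\|Df\|_\rho\geq\lambda>1$ holds only on compact sets bounded away from the cusps of $O_f$ (the super-attracting cycles, where one computes $\|Df\|_\rho\to1$), so ``every inverse branch of $f^n$ $\lambda^{-n}$-contracts $\rho$'' is false near those cusps. Removing the cusp disks from the $0$-tile handles the base level, but you still owe the observation that every intermediate preimage $f^{j-n}(\gamma)$, $0\le j\le n$, stays outside the same disks (which is true because backward iteration near a super-attracting point of local degree $k$ sends $\{|z|<r\}$ to $\{|z|<r^{1/k}\}\supsetneq\{|z|<r\}$, hence the complement shrinks); without this, a single $\lambda$ does not apply at every step. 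You should also state $D$ as the intrinsic $\rho$-path-diameter of $L_0$, not its extrinsic $\rho$-diameter, since you need the connecting path to lie inside $L_0$. None of these issues arise in the paper's proof, which is why it is shorter despite being less quantitative.
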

\begin{proof}
For any $n\geq0$ and $\wt{X}^n_k\in\mf{X}^n(\CCC_f)$, we set $U^n_k:=\tu{int}(\wt{X}^n_k\setminus\mc{K}_n)$. As $\partial \wt{X}^n_k$ is regulated, it follows that $U^n_k$ is a Jordan domain.  Set $\wt{X}^0_i:=f^n(\wt{X}^n_k)$, then it is an $0$-tile and  $f^n:\wt{X}^n_k\to \wt{X}^0_i$ is a conformal homeomorphism.  By the definition of $\mc{K}_n$, we have the conformal homeomorphism $f^n:U^n_k\to U^0_i$.

The proof goes by contradiction. Without loss of generality, we assume that $\{U^{n}_{k_n}=\tu{int}(\wt{X}^{n}_{k_n}\setminus \mc{K}_n)\}_{n\geq1}$ is a sequence of Jordan domains such that
\begin{equation}\label{converging}
\tu{diam}_\sigma(U^n_{k_n})\to C>0 \tu{ as } n\to\infty,
\end{equation}
and $f^{n}(U^n_{k_n})=U^0_0$ for all $n\geq1$. Consider the sequence of conformal maps $\{g_n:=f^{-n}|_{U^0_0}\}$. It is a normal family, because $\mc{K}_0\subseteq \mc{K}_{n}$ and hence their images avoid the set $\mc{K}_0$. We then obtain a subsequence of univalent maps, still written $\{g_n\}$, locally uniformly converging to a holomorphic map $g$ on $U^0_0$. By \eqref{converging} the map $g$ is not a constant. Thus $g:U^0_0\to g(U^0_0)$ is univalent.

We claim that the domain $g(U^0_0)$ is contained in the Fatou set. Otherwise, for any domain $V$ with $\ol{V}\subset g(U^0_0)$ and $V\cap \mathcal{J}_f\neq \es$, the iteration $f^n(V)$ will eventually cover the sphere except two points. This contradicts the fact that $f^{n}(V)\subset U^0_0$ for sufficiently large $n$.

Let $W$ be a domain with ${\rm cl}(W)\subset g(U^0_0)$. Note that each periodic Fatou component are supper attracting (since $f$ is postcritically finite), then the claim above implies that $f^{n}(W)$ converges to an attracting periodic orbit. On the other hand, as $n$ is large enough, the map $f^{n}|_W=g^{-1}_n|_W$ uniformly converges to the univalent map $g^{-1}|_W$. It is a contradiction.
\end{proof}

\begin{proposition}\label{expanding}
The Thurston map $F$ defined in \eqref{commute-1} is  expanding.
\end{proposition}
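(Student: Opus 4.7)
By Lemma \ref{corresponding}, the $n$-tiles in $\mf{X}^n(\CCC_F)$ are exactly the images $\pi(\wt{X}^n)$ of $n$-tiles $\wt{X}^n\in\mf{X}^n(\CCC_f)$. Since the mesh of $F^{-n}(\CCC_F)$ equals the maximal $\sigma$-diameter of an $n$-tile in $\mf{X}^n(\CCC_F)$, my plan is to prove
\[\max_{\wt{X}^n\in\mf{X}^n(\CCC_f)}\tu{diam}_\sigma\bigl(\pi(\wt{X}^n)\bigr)\to 0\text{ as }n\to\infty.\]

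The key step is the set-theoretic identity $\pi(\wt{X}^n)=\pi(\wt{X}^n\setminus\mc{K}_n)$. Since $\mc{K}_n$ is a disjoint union of Fatou components and $\pi$ collapses the closure of each Fatou component $U$ to a single point $p_U$, it suffices to check that whenever a Fatou component $U\subset \mc{K}_n$ meets $\wt{X}^n$, the point $p_U$ already lies in $\pi(\wt{X}^n\setminus\mc{K}_n)$, i.e., that $\partial U\cap \wt{X}^n\neq\es$. Either $\ol{U}\subset\tu{int}(\wt{X}^n)$, so $\partial U\subset \wt{X}^n$ trivially, or $\ol{U}$ meets $\partial \wt{X}^n$. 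In the latter case, since by Lemma \ref{corresponding} the curve $\partial \wt{X}^n$ is regulated, its intersection with $\ol{U}$ is a single point or an arc; either alternative forces at least one point of $\partial U$ to lie in $\partial \wt{X}^n\subset \wt{X}^n$ (a regulated arc crossing $\ol{U}$ must have its endpoints on $\partial U$, and an isolated intersection point cannot be interior to $U$). Because $\partial U\subset\mathcal{J}_f$ is disjoint from the open set $\mc{K}_n$, such a point witnesses $p_U\in\pi(\wt{X}^n\setminus\mc{K}_n)$, establishing the claimed identity.

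Once this is in hand, the conclusion is immediate. Lemma \ref{max_diameter} gives $\max_k\tu{diam}_\sigma(\wt{X}^n_k\setminus\mc{K}_n)\to 0$, and the uniform continuity of $\pi:(\olC,\sigma)\to(S^2,\sigma)$ (noted right after the commutative diagram \eqref{commute-1}) yields
\[\tu{diam}_\sigma\bigl(\pi(\wt{X}^n_k)\bigr)=\tu{diam}_\sigma\bigl(\pi(\wt{X}^n_k\setminus \mc{K}_n)\bigr)\longrightarrow 0\]
uniformly in $k$, which is exactly $\tu{mesh}\,F^{-n}(\CCC_F)\to 0$.

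The main obstacle is the set identity $\pi(\wt{X}^n)=\pi(\wt{X}^n\setminus\mc{K}_n)$: it is tempting but wrong to assume that every Fatou component meeting $\wt{X}^n$ is contained in $\wt{X}^n$, since the regulated curve $\partial\wt{X}^n$ may cut through such a component via an internal-ray-type arc. The care needed is precisely to exploit the "one point or one arc" clause of the definition of regulated arcs to ensure that, nevertheless, $\partial U$ always supplies a point in $\wt{X}^n\setminus\mc{K}_n$ whose $\pi$-image is the collapsed point $p_U$. After that, everything reduces to Lemmas \ref{corresponding} and \ref{max_diameter} plus uniform continuity.
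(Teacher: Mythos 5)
Your proof is correct and follows essentially the same route as the paper: it reduces expansion to the set identity $\pi(\wt{X}^n)=\pi(\wt{X}^n\setminus\mc{K}_n)$ and then invokes Lemmas \ref{corresponding} and \ref{max_diameter} together with uniform continuity of $\pi$. The only difference is that you supply a justification of that set identity (via connectedness of a Fatou component and the regulated structure of $\partial\wt{X}^n$), which the paper merely asserts; your argument for it is sound.
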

\begin{proof}
To show the expansion of $F$, we only have to prove that the maximum of  $\tu{diam}_\sigma(X^n)$ with $X^n\in\mf{X}^n(\CCC_F)$ converges to $0$ as $n\to 0$.
Note that $\pi(\wt{X}^n\setminus\mathcal{K}_n)=\pi(\wt{X}^n_k)$ for all $\wt{X}^n\in\mf{X}^n(\CCC_f)$. Then the proposition follows immediately from Lemmas \ref{corresponding} and  \ref{max_diameter}.
\end{proof}

\subsection{Proof of the main theorem}\label{main-proof}
\begin{proof}[Proof of Theorem \ref{main_thm}]
The proof follows the outline given in  Section\ \ref{outline}.

We first prove the sufficiency.  Let $f$ be a postcritically finite rational map with Sierpi\'{n}ski carpet Julia set. From  Section~\ref{expanding-quotient} we obtain a semi-conjugacy $\pi$ from the rational map $f$ to an expanding Thurston map $F$, and two Jordan curves $\CCC_f$ and $\CCC_F$.

We label post($f$) in $\CCC_f$ by $\wt{x}_1,\cdots,\wt{x}_m, \wt{x}_{m+1}=\wt{x}_1$ successively in the cyclic order, and denote by $\CCC_f(\wt{x}_i,\wt{x}_{i+1})$ the arc on $\CCC_f$ with endpoints $\wt{x}_i$ and $\wt{x}_{i+1}$. Set $x_i=\pi(\wt{x}_i)$ for all $i\in\{1,\ldots,m+1\}$ and similarly define $\CCC_F(x_i,x_{i+1})$.  It is clear that $\CCC_F(x_i,x_{i+1})=\pi(\CCC_f(\wt{x}_i,\wt{x}_{i+1}))$. Moreover, by Lemma \ref{corresponding} there is an one to one correspondence between $\mf{X}^n(\CCC_f)$ and $\mf{X}^n(\CCC_F)$, characterized by the map $\mf{X}^n(\CCC_f)\ni \wt{X}^n_k\mapsto X^n_k:=\pi(\wt{X}^n_k)\in \mf{X}^n(\CCC_F)$ for each $n\geq0$.

Let $\psi:\CCC_f\to \CCC_F$ be an orientation preserving homeomorphism such that $\psi(\wt{x}_i)=x_i$ and $\psi(\CCC_f(\wt{x}_i,\wt{x}_{i+1}))=\CCC_F(x_i,x_{i+1})$ for all $i\in\{1,\ldots,m\}$. There exists then a homotopy $h^0:\CCC_f\times I\to \CCC_F$ rel. ${\rm post}(f)$ from $\psi$ to $\pi|_{\CCC_f}$ by Lemma \ref{homotopy} (1).  We extend $\psi$ to an orientation preserving homeomorphism of $\widehat{\mathbb{C}}$, also denoted by $\psi$, with $\psi(\wt{X}^0_k)=X^0_k,k=0,1$. It follows from Lemma \ref{homotopy} (2) that the homotopy $h^0$ can be extended to a homotopy $H^0:S^2\times I\to S^2$ rel. ${\rm post}(f)$ from $\psi$ to $\pi$. By the property of $\pi$ that $\pi^{-1}(x)$ is either a point in $\mathcal{J}_f$ or the closure of a Fatou component, and the specific construction of homotopies in Lemma \ref{homotopy}, we have that $(H^{0}_t)^{-1}(x_i)=x_i,\forall t\in[0,1)$ and $(H^{0}_1)^{-1}(x_i)=[\wt{x}_i]$  for each $x_i\in {\rm post}(F)$.

We know that the Riemann sphere $\widehat{\mathbb{C}}$ and the $2$-sphere $S^2$ admit a partition by the $1$-tiles relative to $(f,\CCC_f)$ and $(F,\CCC_F)$ respectively, and the numbers of $\mf{X}^1(\CCC_f)$ and $\mf{X}^1(\CCC_F)$ are both $2d$.
For each $j\in\{1,\ldots,2d\}$, we define a map
$$\phi_j:=(F|_{X^1_j})^{-1}\circ \psi\circ f|_{\wt{X}^1_j}:\wt{X}^1_j\to X^1_j.$$
It is a composition of $3$ onto homemorphisms, and hence a homeomorphism. It also satisfies that $\phi_j(\partial \wt{X}^1_j)=\partial X^1_j$ and $\phi_j|_{f^{-1}(\textup{post}(f))}=\pi|_{f^{-1}(\textup{post}(f))}$. Using Lemma \ref{homotopy} again, we get a homotopy $H_j^1:\wt{X}^1_j\times I\to X^1_j$ rel. $f^{-1}(\textup{post}(f))$ from $\pi|_{\wt{X}^1_j}$ to $\phi_j$.

 We define the map $\phi:\widehat{\mathbb{C}}\to S^2$ by $\phi(z):=\phi_j(z)$ if $z\in \wt{X}^1_j$, and the map $H^1:\widehat{\mathbb{C}}\times I\to S^2$ by $H^1(z,t):=H^1_j(z,t)$ if $z\in \wt{X}^1_j, t\in[0,1]$. It is clear that $\phi$ is a homeomorphism and $H^1$ is a homotopy rel. $f^{-1}({\rm post}(f))$ from $\pi$ to $\phi$.  With the same reason, the homotopy $H^1$ satisfies the similar property as $H^0$, that is, $(H^{1}_t)^{-1}(p)=\wt{p},\forall t\in[0,1)$ and $(H^{1}_1)^{-1}(p)=[\wt{p}]$ for all $p\in F^{-1}({\rm post}(F))$, where $ \wt{p}\in f^{-1}({\rm post}(f))$ satisfy that $\pi(\wt{p})=p$.

Concatenating the homotopies $H^0, H^1$ together, we get a homotopy $H:\widehat{\mathbb{C}}\times I\to S^2$ rel. ${\rm post}(f)$ from $\psi$ to $\phi$ defined by
\[H(z,t)=\left\{
    \begin{array}{ll}
      H^0(z,2t), & \hbox{if $z\in \widehat{\mathbb{C}}, t\in[0,1/2]$;} \\[5pt]
      H^1(z,2t-1), & \hbox{if $z\in \widehat{\mathbb{C}}, t\in[1/2,1]$.}
    \end{array}
  \right.
\]
The homotopy $H$ satisfies that $H^{-1}_t(x_i)=x_i, \forall t\in[0,1]\setminus\{1/2\}$ and $H^{-1}_{1/2}(x_i)=[\wt{x}_i]$, for each $x_i\in {\rm post}(F)$.
By Lemma \ref{good-homo}, the homeomorphisms $\psi$ and $\phi$ are isotopic rel. ${\rm post}(f)$.  

We now turn to the necessity. Let $f$ be a postcritically finite rational map. By Whyburn's characterization (see  Section~\ref{rational map}) and Lemma \ref{small}, in order to show that $\mathcal{J}_f$ is a Sierpi\'{n}ski carpet, we just need to prove that
the closures of any two distinct Fatou components are disjoint and each Fatou component is a Jordan domain.

Let $f$ be Thurston equivalent to an expanding Thurston map $F$ via $h_0, h_1$. Using isotopy lifting theorem (see \cite[Proposition 11.1]{BM}) repeatedly, we obtain a sequence of homeomorphisms $\{h_n\}_{n\geq 0}$ such that $h_n\circ f=F\circ h_{n+1}$ and $h_n$ is isotopic to $h_{n+1}$ rel. $f^{-n}(\textup{post}(f))$, i.e., the following diagram commutes.
\begin{equation}\label{commute-3}
\begin{CD}
{\widehat{\mathbb{C}}} @> {\cdots} >> {\widehat{\mathbb{C}}}@>{f}>>{\widehat{\mathbb{C}}}@>{\widehat{\mathbb{C}}}>>{\widehat{\mathbb{C}}}@>{f}>>{\widehat{\mathbb{C}}}@>{f}>>{\widehat{\mathbb{C}}} \\
@ VV{\cdots}V @ VV {h_{n+1}} V@VV {h_n}V@VV {h_2}V@VV {h_1}V@VV {h_0}V \\
{S^2} @> {\cdots} >> {S^2}@>{F}>>{S^2}@>{\cdots}>>{S^2}@>{F}>>{S^2}@>{F}>>{S^2} \\
\end{CD}
\end{equation}
Since $F$ is expanding, by \cite[Lemma 11.3]{BM}, the sequence of homeomorphisms $\{h_n\}_{n\geq 0}$ uniformly converges to a continuous map $h$ with respect to a metric $\omega$ on $S^2$. We then get a semi-conjugacy $h$ from  $f$ to $F$, i.e., $F\circ h=h\circ f$ on $\widehat{\mathbb{C}}$. Besides, the restriction
\begin{equation}\label{bij_pre_post}
h:\cup_{n\geq 0} f^{-n}(\textup{post}(f))\to \cup_{n\geq 0}F^{-n}(\textup{post}(f))
\end{equation}
is bijective.
Because for all $k\geq n\geq 0$, $h_k=h_n:f^{-n}(\textup{post}(f))\to F^{-n}(\textup{post}(F))$ is bijective.

We claim that $h(\overline{U})$ is a singleton for each Fatou component $U$. By Sullivan's non-wandering Fatou component Theorem, we can assume that $U$ is fixed by $f$. Let $r'$ be a periodic internal ray of $U$ with period $q$, and denote the center of $U$ by $c$.
For each $n\geq 0$, we set $r_{n}:=h_{qn}(r')$, which is a lift of $r_{0}$ by $F^{qn}$ following the commutative diagram (\ref{commute-3}). By the expansion of $F$, it is proved in \cite[Lemma~8.7]{BM} that
$\textup{diam}_{\omega}(r_{n})\leq C{\Lambda^{-qn}}\to 0$
as $n\to \infty$, where $\Lambda\geq1$ and $C$ are constants. So $h(r')=h(c)$.
As the periodic internal rays are dense in $\overline{U}$, we get that $h(\overline{U})=h(c)$.
By this claim and  \eqref{bij_pre_post}, the closures of distinct Fatou components are disjoint.

We are left to show that each Fatou component is a Jordan domain. Without loss of generality, let $U$ be a fixed Fatou component. We assume that $U$ is not a Jordan domain and argue by contradiction. Note that $\partial U$ is locally connected, from the \emph{B\"{o}ttcher's theorem} there exist two internal rays of $U$ landing at a common point in $\partial U$.  The closure of their union is a Jordan curve bounding two domains $W_0$ and $W_1$ with $W_i\cap \mathcal{J}_f\neq \emptyset$, $i\in\{0,1\}$.

We claim that each of the domains $W_0,W_1$ contains a Fatou component. Otherwise, there is $i\in\{0,1\}$  such that $f^n(W_i)\subseteq (U\cup \mathcal{J}_f)$ for all $n\geq 0$. By the topological transitivity of the Julia set, the set $f^n(W_i)$ for sufficiently large $n$, hence $U\cup \mathcal{J}_f$, covers $\widehat{\mathbb{C}}$ except at most two points (see \cite[Theorem 4.10]{Mi1}). It means that $f$ has only one Fatou component $U$, impossible.

Let $U_0$ and $U_1$ be the Fatou components contained in $W_0$ and $W_1$ respectively. By the discussion above, we have that the images $h(\overline{U}),h(\overline{U_0})$ and $h(\overline{U_1})$ are pairwise different points. Consequently, the set $h^{-1}(h(\overline{U}))$ contains the Jordan curve $\partial W_0=\partial W_1\subset \overline{U}$, and is disjoint with $U_0,U_1$. It implies that $S^2\setminus h^{-1}(h(\overline{U}))$ is not connected. On the other hand,
note that $h$ is the limit of a sequence of homeomorphisms of $S^2$. By \cite[Lemma 3.1]{CPT}, such a map $h$ has a property that $S^2\setminus {h^{-1}(x)}$ is connected for any $x\in S^2$. It contradicts that $S^2\setminus h^{-1}(h(\overline{U}))$ is not connected.  The proof of the necessity is completed.
\end{proof}

\section{Appendix}\label{sec-appendix}
\begin{proof}[Proof of Lemma \ref{good-homo}]
Let $H:S\times I\to S$ rel.~$\partial S\cup P$ be a homotopy from $id_S$ to $h$. Set $K:=\cup_{p\in P}K_p$. See Lemma \ref{good-homo} for the definition of $K_p$.  We can choose a closed topological disk $D_p\subset S$ for each $p\in P$ such that

  $\bullet$ $D_p\cap P=\{p\}$;

  $\bullet$ $D_p\cap D_q=\emptyset$ if $p\not=q\in P$ and

  $\bullet$ $\gamma_p:=\partial D_p\subset S\setminus K$.

We claim that it is enough to prove the lemma in the case that $h|_{\gamma_p}=id_{\gamma_p}$ for all $p\in P$. To see this, note first that the homotopy $H$ induces a homotopy
$H|_{\gamma_p\times I}:\gamma_p\times I\to S\setminus P$ from $\gamma_p$ to $h(\gamma_p)$ for each $p\in P$. By \cite[Theorem~2.1]{Ep}, there exists an isotopy $$\phi:(S\setminus P)\times I\to S\setminus P \text{ rel}.~\partial(S\setminus P)$$
 such that $\phi_0=id_{S\setminus P}$ and $\phi_1|_{\gamma_p}=h|_{\gamma_p}$ for all $p\in P$. It can be also viewed as an isotopy  $\phi: S\times I\to S$ rel.~$\partial S\cup P$. We then get a homotopy $\Phi:S\times I\to S$ rel.~$\partial S\cup P$, defined by $\Phi(x,t):=\phi_t^{-1}\circ H_t(x)$, from $id_S$ to $\phi_1^{-1}\circ h$.  The map $\phi_1^{-1}\circ h$  is identity when restricted on $\cup_{p\in P}\gamma_p$.  If we proved that $id_S\overset{{\rm iso.}}{\thicksim}\phi_1^{-1}\circ h$ rel.~$\partial S\cup P$, it follows that
$id_S\overset{{\rm iso.}}{\thicksim}\phi_1\overset{{\rm iso.}}{\thicksim} h$ rel.~$\partial S\cup P$.

We now assume that $h|_{\gamma_p}=id_{\gamma_p}$ for all $p\in P$. We claim that if each $\gamma_p$ is fixed by the homotopy $H$, i.e., $H(x,t)=x$ for all $t\in[0,1],x\in\gamma_p$, then the conclusion holds. Let the homotopy $H$ satisfy the property of this claim. Applying $(2)$ in Lemma \ref{homotopy} to $h|_{D_p}$ and $id_{D_p}$ with $A=\gamma_p$, we have that $h|_{D_p}$ is isotopic to $id_{D_p}$ rel.~$\gamma_p$ for each $p\in P$. Set $M:=S\setminus(\cup_{p\in P}{\rm int}(D_p))$. For each $p\in P$, we construct a radical projection
\[\pi_p:D_p\setminus\{p\}\to \gamma_p~~~z\mapsto\alpha_p^{-1}(\alpha_p(z)/|\alpha_p(z)|),\]
where $\alpha_p:D_p\to {\rm cl}(\D)$ is a homeomorphism with $\alpha_p(p)=0$. Note that for any $x\in M$, the curve $H(x,t),t\in I$ avoids $P$. Then we define a map  $\widetilde{H}:M\times I\to S$ by
\[\widetilde{H}(x,t):=\left\{
    \begin{array}{ll}
      \pi_p\circ H(x,t), & \hbox{if $H(x,t)\in D_p$ for some $p\in P$;} \\[5pt]
      H(x,t), & \hbox{otherwise.}
    \end{array}
  \right.
\]
This map is continuous and satisfies that
\begin{itemize}
  \item $\widetilde{H}_0=id_M$ and $\widetilde{H}_1=h|_M$;
  \item $\widetilde{H}(x,t)=x$ for $x\in \cup_{p\in P}\gamma_p,t\in[0,1]$; and
\item $\widetilde{H}_t(M)\subset M$ for $t\in[0,1]$.
\end{itemize}
 Thus we get a homotopy $\widetilde{H}|_{M\times I}:M\times I\to M$ rel.~$\partial M$. By Theorem \ref{homo-iso}, the maps $id_M$ and $h|_{M}$ are isotopic rel.~$\partial S$. From the argument above, we see that $id_S$ is isotopic to $h$ rel. $\partial S\cup P$.

Consequently, we remain to find a homotopy $\widehat{H}:S\times I\to S$ rel.~$\partial S\cup P\cup(\cup_{p\in P}\gamma_p)$ from $id_S$ to $h$ under the assumption that $h|_{\gamma_p}=id_{\gamma_p}$ for all $p\in P$.

Cutting the disks $D_p,p\in P$ from $S$, we get the surfaces $D_p,p\in P$ and $M$ following the notations above. Note that each $\gamma_p$ belongs to both $D_p$ and $M$. For distinguishing, we denote the $\gamma_p$ in $D_p$ by $\gamma_p^-$ and that in $M$ by $\gamma_p^+$. And a point $\xi\in\gamma_p$ is represented by $\xi^{\pm}$ in $\gamma_p^{\pm}$. We then paste each $D_p$ with $M$ by the annulus $A_p:=\gamma_p\times[-1,1]$. Precisely, let $\approx$ be an equivalence relation on the disjoint union $(\sqcup_{p\in P} D_p) \sqcup (\sqcup_{p\in P}A_p) \sqcup M$ such that $x\approx y$ if and only if $x=y$, or $x=\xi^{\pm}\in\gamma_p^{\pm}$ and $y=\xi\times\{\pm1\}\in A_p$ for some $\xi\in\gamma_p$ and $p\in P$.
The quotient space
$$S_b:=(\bigsqcup_{p\in P} D_p \sqcup \bigsqcup_{p\in P} A_p \sqcup M)/_\approx$$
is clearly homeomorphic to $S$.

For a point $x\in S_b$, it has a natural name $x$ as in the original surface $S$ if $x\in(\sqcup_{p\in P} D_p) \sqcup M$; and it is parameterized by
$\{(\xi,s):\xi\in \gamma_p, s\in [-1,1]\}$ if $x\in A_p$ for some $p\in P$.
With these notations, we define a map $H_b:S_b\times I\to S$ by
\[H_b(x,t):=\left\{
              \begin{array}{ll}
                H(x,t), & \hbox{if $x\in (\sqcup_{p\in P} D_p) \sqcup M$;} \\[5pt]
                H(\xi, t|s|), & \hbox{if $x=(\xi,s)\in A_p$ for some $p\in P$.}
              \end{array}
            \right.
\]
Then the map $H_b:S_b\times I\to S$ is a homotopy rel.~$\partial S\cup P\cup (\cup_{p\in P}\gamma_p\times\{0\})$.

As $S_b$ is homeomorphic to $S$, we identify $S_b$ with $S$ and identify a point $\xi\times 0\in\gamma_p\times0\subset A_p$ with $\xi\in\gamma_p\subset S$ for $p\in P$. In such a view, each $A_p$ is an annulus neighborhood of $\gamma_p$, and $H_b:S\times I\to S$ is a homotopy relative to $\partial S\cup P\cup (\cup_{p\in P}\gamma_p)$.

By the definition of $H_b$, the map $H_b|_{S\times0}$ (\emph{resp.} $H_b|_{S\times1}$) is homotopic to $id_S$ (\emph{resp.} $h$) rel. $\partial S\cup P\cup (\cup_{p\in P}\gamma_p)$. It follows that
\[id_S\overset{homotopy}{\thicksim}H_b|_{S\times0}\overset{~H_b~}{\thicksim} H_b|_{S\times1}\overset{homotopy}{\thicksim} h~{\rm rel}.~\partial S\cup P\cup (\cup_{p\in P}\gamma_p).\]
Then the proof is completed.
\end{proof}


\vspace{0.2cm}

\noindent Yan Gao, \\
Mathemaitcal School  of Sichuan University, Chengdu 610064,
P. R. China. \\
Email: gyan@scu.edu.cn
\vspace{0.2cm}

\noindent Jinsong Zeng, \\
Academy of Mathematics and Systems Science, \\
Chinese
Academy of Sciences, Beijing 100190, P. R. China. \\
Email: jinsong.zeng@amss.ac.com
\vspace{0.2cm}

\noindent Suo Zhao,  \\
Mathemaitcal School  of Sichuan University, Chengdu 610064,
P. R. China. \\
 Email: saaki@163.com

\end{document}